\tikzset{join/.code=\tikzset{after node path={%
\ifx\tikzchainprevious\pgfutil@empty\else(\tikzchainprevious)%
edge[every join]#1(\tikzchaincurrent)\fi}}}
\tikzset{>=stealth',every on chain/.append style={join},
         every join/.style={->}}
\tikzstyle{labeled}=[execute at begin node=$\scriptstyle,
\newtheorem{example}{\textit{Example}}[section]
\title{Algorithm for Optimization and Interpolation based on Hyponormality}
\author{C\'edric Josz\footnotemark[1] }
\begin{document}
\maketitle

\renewcommand{\thefootnote}{\fnsymbol{footnote}}

\footnotetext[1]{Laboratory for Analysis and Architecture of Systems (LAAS), French National Center for Scientific Research (CNRS), 7, avenue du Colonel Roche, Toulouse, 31000, France (\email{cedric.josz@gmail.com}). The research was funded by the European Research Council (ERC) under the European Union's Horizon 2020 research and innovation program (grant agreement 666981 TAMING).}

\renewcommand{\thefootnote}{\arabic{footnote}}

\slugger{mms}{xxxx}{xx}{x}{x--x}

\begin{abstract}
On one hand, consider the problem of finding global solutions to a polynomial optimization problem and, on the other hand, consider the problem of interpolating a set of points with a complex exponential function. This paper proposes a single algorithm to address both problems. It draws on the notion of hyponormality in operator theory. Concerning optimization, it seems to be the first algorithm that is capable of extracting global solutions from a polynomial optimization problem where the variables and data are complex numbers. It also applies to real polynomial optimization, a special case of complex polynomial optimization, and thus extends the work of Henrion and Lasserre implemented in GloptiPoly. Concerning interpolation, the algorithm provides an alternative to Prony's method based on the Autonne-Takagi factorization and it avoids solving a Vandermonde system. The algorithm and its proof are based exclusively on linear algebra. They are devoid of notions from algebraic geometry, contrary to existing methods for interpolation. The algorithm is tested on a series of examples, each illustrating a different facet of the approach. One of the examples demonstrates that hyponormality can be enforced numerically to strenghten a convex relaxation and to force its solution to have rank one.
\end{abstract}

\begin{keywords}
Autonne-Takagi factorization,
Cholesky factorization,
Hankel matrix,
hyponormality,
moment problem,
Toeplitz matrix.
\end{keywords}

\begin{AMS}49M20, 65F99, 47N10. \end{AMS}

\pagestyle{myheadings}
\thispagestyle{plain}
\markboth{C\'EDRIC JOSZ}{Algorithm for Optimization and Interpolation based on Hyponormality}
\section{Introduction}
Consider the problem of finding global solutions to the following complex polynomial optimization problem
\begin{equation}
\label{eq:pop}
\begin{array}{ll}
\inf_{z \in \mathbb{C}^n} ~ & f(z) ~:= \sum\limits_{\alpha,\beta} f_{\alpha,\beta} \bar{z}^\alpha z^\beta \\[1em]
\mathrm{s.t.} & g_i(z) := \sum\limits_{\alpha,\beta} g_{i,\alpha,\beta} \bar{z}^\alpha z^\beta \geqslant 0, \quad i=1,\ldots,m
\end{array}
\end{equation}
where we use the multi-index notation $z^\alpha := z_1^{\alpha_1} \cdots z_n^{\alpha_n}$ for $z \in {\mathbb C}^n$,
$\alpha \in {\mathbb N}^n$, and $\bar{z}$ stands for the conjugate of $z$. As usual, $\mathbb{C}$ denotes the set of complex numbers and $\mathbb{R}$ will denote the set of real numbers. The functions $f, g_1, \ldots, g_m$ are real-valued polynomials so that in the above sums only a finite number of coefficients $f_{\alpha,\beta}$ and $g_{i,\alpha,\beta}$ are nonzero and they satisfy $\overline{f_{\alpha,\beta}} = f_{\alpha,\beta}$ and $\overline{g_{i,\alpha,\beta}} = g_{i,\alpha,\beta}$. The feasibility set is defined as
\begin{equation}
K:=\{z \in {\mathbb C}^n \: :\: g_i(z) \geqslant 0, \: i=1,\ldots,m\}.
\end{equation}
We define its degree to be 
\begin{equation}
d_K := \max \{1,k_1,\hdots,k_m\}
\end{equation}
where $k_i := \max \{ |\alpha|,|\beta| ~\text{s.t.}~ g_{i,\alpha,\beta} \neq 0 \}$ is the maximal degree $|\alpha|:= \sum_{k=1}^n \alpha_k$ in either the conjugate or non-conjugate powers of the polynomial $g_i$. Note that this is different from the degree of the polynomial, which is related to the sum of the conjugate and non-conjugate powers, i.e. $\text{deg}(g_i) := \max \{ |\alpha|+|\beta| ~\text{s.t.}~ g_{i,\alpha,\beta} \neq 0 \}$.

To solve (\ref{eq:pop}), Molzahn and I proposed \cite{josz-molzahn-2015} proposed a semidefinite programming relaxation hierarchy in complex numbers which generalizes Lasserre's hierarchy \cite{lasserre-2000,lasserre-2001,parrilo-2000b,parrilo-2003} and relies on the recent results \cite{angelo-2008,putinar-2013}. It consists in the following primal-dual problems 
\begin{equation}
\begin{array}{c}
\inf_y ~ L_y(f) ~~~ \text{s.t.} ~~~  M_d(y) \succcurlyeq 0 ~\text{and}~ M_{d-k_i}(g_iy) \succcurlyeq 0, ~ i = 1, \hdots, m, \\\\
\sup_{\lambda \in \mathbb{R}, \sigma_j \in \Sigma_d[z]} ~ \lambda ~~~ \text{s.t.} ~~~ f - \lambda = \sigma_0 + \sigma_1 g_1 + \hdots + \sigma_m g_m
\end{array}
\end{equation}
where $\succcurlyeq 0$ stands for positive semidefinite and the integer $d$ is the \textit{truncation order}. The \textit{moment} matrix is defined by 
\begin{equation} 
M_d(y) := (y_{\alpha,\beta})_{|\alpha|,|\beta|\leqslant d}
\end{equation}
and the \textit{localizing} matrix is defined by 
\begin{equation}
M_{d-k_i}(g_iy) := \left(\sum_{\gamma,\delta} g_{i,\gamma,\delta} y_{\alpha,\beta}\right)_{|\alpha|,|\beta|\leqslant d-k_i}.
\end{equation}
A polynomial $\sigma(z) = \sum_{|\alpha|,|\beta|\leqslant d} \sigma_{\alpha,\beta} \bar{z}^\alpha z^\beta$ is a \textit{Hermitian sum of squares}, i.e. it belongs to $\Sigma_d[z]$, if it is of the form
\begin{equation}
\sigma(z) = \sum_k \left| \sum_{|\alpha|\leqslant d} p_{k,\alpha} z^\alpha \right|^2  ~~\text{where}~~ p_{k,\alpha} \in \mathbb{C}.
\end{equation}
This is equivalent to $(\sigma_{\alpha,\beta})_{|\alpha|,|\beta|\leqslant d} \succcurlyeq 0$ where $\succcurlyeq$ stands for positive semidefinite (see \cite{josz-molzahn-2015} for an explanation). In the above equation, $|\cdot|$ stands for modulus of a complex number.

If one were to convert a complex polynomial optimization problem to real numbers and apply Lasserre's hierarchy, the moment matrix would be $2^d$ times bigger asymptotically (for a large number of variables), hence the advantage of the complex hierarchy we've just described. This comes at the cost of a potentially lower optimal value at a given truncation order $d$. Significant numerical advantages can be seen on the optimal power flow in electrical engineering \cite{carpentier-1962,mh_sparse_msdp,josz-molzahn-2015} on instances with several thousand variables and constraints.

Global convergence is guaranteed in the presence of a sphere constraint, i.e. $|z_1|^2 + \hdots + |z_n|^2 = R^2$. At the cost of an additional variable, any complex polynomial optimization problem with compact feasible set can be solved by this approach, as explained in \cite{josz-molzahn-2015}. In that work,
conditions for extracting global minimizers were given, but a general procedure for extracting them was left for future work. One of the objectives of this paper is to fill this gap. Specifically, we propose an algorithm to extract an atomic measure $\mu$ from the truncated data $M_d(y)$, i.e. that satisfies
\begin{equation}
y_{\alpha,\beta} = \int_K \bar{z}^\alpha z^\beta d\mu, ~~~ \forall |\alpha|,|\beta| \leqslant d.
\end{equation}
The atoms are then global solutions to the polynomial optimization problem. There exists no method in the present literature that achieves this to the best of our knowledge. Our algorithm also applies to real polynomial optimization, for which a method already exists \cite{henrion2005} and was implemented in Gloptipoly \cite{henrion2009}. A variant to that method was later proposed in \cite{laurent2009}. We next consider a seemingly unrelated problem for which the same algorithm applies.

Consider the following sum of complex exponential functions
\begin{equation}
\label{eq:exp}
\begin{array}{rccl}
f : & \mathbb{C}^n & \longrightarrow & \mathbb{C} \\
& z\hphantom{^n} & \longmapsto & \sum\limits_{k=1}^d w_k \exp\left(\sum\limits_{i=1}^n f_{ik} z_i\right)
\end{array}
\end{equation}
composed of weights $w_1, \hdots , w_p \in \mathbb{C}^n$ and frequencies $f_1,\hdots,f_d \in \mathbb{C}^n$ (using the shorthand $f_k = ( f_{1k} , \hdots , f_{nk})^T$ where $(\cdot)^T$ stands for transpose). Say we want to interpolate a set of imposed values $(y_{\alpha})_{|\alpha| \leqslant 2d}$ with such a function
\begin{equation}
\label{eq:interpolation}
f(\alpha) = y_\alpha~,~~~ \forall |\alpha|\leqslant 2d.
\end{equation}
In other words, the problem consists in computing weights and frequencies that match the interpolation values. As is well known (e.g.,\cite{kunis2016}), these values satisfy
\begin{equation}
f(\alpha) = \sum_{k=1}^d w_k \exp\left(\sum_{i=1}^n f_{ki} \alpha_i\right) = \sum_{k=1}^d w_k (\exp(f_k))^{\alpha} = \int_{\mathbb{C}^n} z^\alpha d\nu
\end{equation}
where 
\begin{equation}
\nu := \sum_{k=1}^d w_k \delta_{\exp(f_k)}.
\end{equation}
We use the notation $\exp(f_k) := ( \exp(f_{1k}), \hdots, \exp(f_{nk}) )^T$ and $\delta_z$ stands for the Dirac measure at $z \in \mathbb{C}^n$. The interpolation values are thus the moments on $\mathbb{C}^n$ of the measure $\nu$. In this setting, we can consider the following moment matrix 
\begin{equation}
\label{eq:hankel}
\mathcal{H}_d(y) := (y_{\alpha+\beta})_{|\alpha|,|\beta|\leqslant d}
\end{equation}
which is a complex Hankel matrix (i.e. $y_{\alpha,\beta} = y_{\gamma,\delta}$ for all $|\alpha|,|\beta|,|\gamma|,|\delta| \leqslant d$ such that $\alpha+\beta=\gamma+\delta$). Our algorithm extracts the sought measure $\nu$ from this matrix by using the Autonne-Takagi factorization \cite{takagi,autonne}, also known as symmetric singular value decomposition, which applies to complex symmetric matrices. To the best of our knowledge, this factorization has not been used in this context. For a thorough survey on the applications of complex symmetry, see \cite{garcia2014}; for recent development on the Autonne-Takagi factorization, see \cite{horn2012}. Another feature of our algorithm is that it avoids solving a Vandermonde linear system, as in the recent preprint \cite{harmouch2017}. Existing methods were initiated by Baron Gaspard Riche de Prony in 1795 \cite{prony1795}, and have been subject to formidable developments
\cite{beylkin2005,pereyra2010,sauer2016,andersson2010,mourrain2016,kunis2016bis} and applications \cite{backstrom2013,golub2003,potts2013,potts2010,roy1990,swindlehurst1992}. One application is to recover a damped sinusoidal function of real variable $t$ of the form
\begin{equation}
f(t) = \sum\limits_{k=1}^d A_k \exp(\sigma_{k} t)\cos( w_{k} t + \phi_k)
\end{equation}
from a small number of its evaluations, where
\begin{itemize}
\item $A_k$: amplitude
\item $\sigma_k$: damping
\item $w_k$: angular frequency
\item $\phi_k$: phase shift.
\end{itemize}
Indeed, such a function is special case of a complex exponential function presented above since $\cos(w_{k} t + \phi_k)= 1/2 \exp(i w_{k} t + i\phi_k) + 1/2 \exp(-iw_{k} t - i\phi_k) $. We briefly recall Prony's method in the univariate setting of \eqref{eq:exp}-\eqref{eq:interpolation} above. It consists in defining the polynomial
\begin{equation}
p(z) := \sum_{|\alpha| \leqslant d} p_{\alpha} z^\alpha := (z - \exp(f_1) ) \hdots (z-\exp(f_d)) 
\end{equation}
which, for all $|\alpha| \leqslant d$, satisfies 
\begin{equation}
\begin{array}{rcl}
\sum\limits_{|\beta| \leqslant d} f(\alpha+\beta) ~ p_{\beta} & = & \sum\limits_{|\beta| \leqslant d} \sum\limits_{k=1}^d w_k \exp(f_k(\alpha+\beta)) ~ p_\beta \\[1.15em]
 & = & \sum\limits_{k=1}^d w_k \exp(f_k \alpha) ~ \sum\limits_{|\beta| \leqslant d}  p_\beta \exp(f_k)^\beta \\[.5em]
 & = & \sum\limits_{k=1}^d w_k \exp(f_k \alpha) ~ p(\exp(f_k)) = 0
\end{array}
\end{equation}
so that its coefficients lie in the Kernel of the Hankel matrix $\mathcal{H}_d(y)$ in \eqref{eq:hankel}. The Kernel is unidimensional and $p_d = 1$, so the coefficients are uniquely determined. The frequencies of the interpolation function are given by the its roots, and the weights can then be deduced by the Vandermonde system
\begin{equation}
\begin{pmatrix}
\hphantom{^{d}}1\hphantom{^{d-1}} & \hdots & 1\hphantom{^{d-1}} \\
\hphantom{^{d}}\exp(f_1)\hphantom{^{d-1}} & \hdots & \exp(f_d)\hphantom{^{d-1}} \\
\vdots & & \vdots \\
\hphantom{^{d}}\exp(f_1)^{d-1} & \hdots & \exp(f_d)^{d-1}
\end{pmatrix}
\begin{pmatrix}
w_1 \\[.25cm]
\vdots \\[.25cm]
w_d
\end{pmatrix}
=
\begin{pmatrix}
f(0) \\[.25cm]
\vdots \\[.25cm]
f(d-1)
\end{pmatrix}.
\end{equation}

This paper is organized as follows. Section \ref{sec:Contributions} summarizes the contributions of this paper. Section \ref{sec:Joint hyponormality} provides some background on hyponormality and compares the notion in infinite and finite dimensions. Section \ref{sec:Truncated moment problem} reviews a result on the moment problem in complex numbers and provides a new result for Toeplitz matrices. The same machinery is then applied to the truncated moment problem which arises in exponential interpolation. The constructive proofs of Section \ref{sec:Truncated moment problem} yield an algorithm for optimization and interpolation in Section \ref{sec:Algorithm}. It is illustrated on various examples in Section \ref{sec:Numerical experiments}. Finally, Section \ref{sec:Conclusion} concludes our work.

\section{Contributions}
\label{sec:Contributions}
The main contribution of this paper is to apply hyponormality to polynomial optimization and exponential interpolation. This has not been considered in past literature to the best of our knowledge. We obtain the following new results using hyponormality.
\begin{enumerate}
\item We propose a procedure to extract global solutions from a polynomial optimization problem when using the moment/sum-of-squares hierarchy in complex numbers (algorithm in Section \ref{sec:Algorithm}).
\item We propose a variant to Prony's method based on the Autonne-Takagi factorization using the same algorithm in Section \ref{sec:Algorithm}, thereby unifying two \textit{a priori} unrelated problems: optimization and interpolation.
\item We propose a moment/sum-of-squares hierarchy for complex polynomial optimization in which hyponormality is enforced via convex constraints and illustrate it on an example (Example \ref{eg:ellipsebis} in Section \ref{sec:Numerical experiments}).
\item We propose a solution to the truncated moment problem on a semi-algebraic set when the data forms a Toeplitz matrix (Theorem \ref{th:toeplitz} in Section \ref{sec:Truncated moment problem}). We find that Toeplitz and Hankel matrices play analogeous role with respect to the moment problem. 
\item We analyse the different properties of shift operators associated to truncated data (eg. unitary, symmetric) and relate them to applications in optimization and interpolation (examples 1 through 7 in Section \ref{sec:Numerical experiments}).
\end{enumerate}

\section{Joint hyponormality}
\label{sec:Joint hyponormality}
We recall some fundamental notions in operator theory in order to discuss joint hyponormality. Let $\mathcal{B}(\mathcal{H})$ denote the set of linear bounded operators acting on a complex Hilbert space $(\mathcal{H}, \langle \cdot, \cdot \rangle)$. We convene that the inner product is conjugate-linear in its first variable, and linear in its second. A notion of positivity can be defined for an element $T$ of $\mathcal{B}(\mathcal{H})$, which is that $\langle u , T u \rangle \geqslant 0$ for all $u \in \mathcal{H}$, and which we denote $T \succcurlyeq 0$. In addition, the commutator of $A,B \in \mathcal{B}(\mathcal{H})$ is defined as $[A,B]:= AB - BA$. Finally, let $A^*$ denote the adjoint of $A \in \mathcal{B}(\mathcal{H})$. Given these notations, an operator $T \in \mathcal{B}(\mathcal{H})$ is said to be $\hdots$
\begin{itemize}
\item \textit{normal} if $[T^*,T] = T^*T - TT^* = 0$;
\item \textit{subnormal} if it can be extended to a normal operator $N$ on a larger Hilbert space $\mathcal{K}$;
\item \textit{hyponormal} if $[T^*,T] = T^*T - TT^* \succcurlyeq 0$.
\end{itemize}
The following implications hold
$$ \text{normal} ~~~ \Longrightarrow ~~~ \text{subnormal} ~~~ \Longrightarrow ~~~ \text{hyponormal} $$
The first implication is obvious. Concerning the second, let $P$ denote the orthogonal projection of $\mathcal{K}$ on $\mathcal{H}$. Then, for all $u \in \mathcal{H}$, we have $Tu = NPu$ and thus, $T^* = (NP)^* = P^* N^* = P N^*$ as projections are self-adjoint. Following \cite{curto-2010}, we have
\begin{equation}
\begin{array}{rcl}
\langle u , [T^*,T] u \rangle & = & \| T u \|^2 - \|T^*u\|^2 \\
 & = &  \| N u \|^2 - \|PN^*u\|^2 \\
 & = &  \| N^* u \|^2 - \|P(N^*u)\|^2 \geqslant 0 \\
\end{array}
\end{equation}
where $\|\cdot\|$ stands for the norm induced by the inner product. The notions of subnormality and hyponormality were introduced by Halmos \cite{halmos1950} in 1950 when studying the unilateral shift operator, that is 
\begin{equation}
\begin{array}{rccc}
T : & l^2(\mathbb{N}) &\longmapsto & l^2(\mathbb{N}) \\
& (u_0,u_1,u_2, \hdots ) & \longrightarrow & (0,u_0,u_1,\hdots)
\end{array}
\end{equation}
where $l^2(\mathbb{N})$ is the Hilbert space of square-summable sequences of complex numbers indexed by the natural numbers $\mathbb{N}$.
Its adjoint is 
\begin{equation}
\begin{array}{rccc}
T^* : & l^2(\mathbb{N}) &\longmapsto & l^2(\mathbb{N}) \\
& (u_0,u_1,u_2, \hdots ) & \longrightarrow & (u_1,u_2,u_3,\hdots)
\end{array}
\end{equation}
It is not normal since for all $u\in l^2(\mathbb{N})$, we have $(T^*T - TT^*)u = (u_0,0,\hdots,0,\hdots)$. However, it admits the following normal extension
\begin{equation}
\begin{array}{rccc}
N : & l^2(\mathbb{Z}) &\longmapsto & l^2(\mathbb{Z}) \\
& (\hdots, u_{-2},u_{-1},u_0,u_1,u_2, \hdots ) & \longrightarrow & (\hdots,u_{-3},u_{-2},u_{-1},u_0,u_1,\hdots)
\end{array}
\end{equation}
where $l^2(\mathbb{Z})$ is the Hilbert space of square-summable sequences indexed by the integers $\mathbb{Z}$.
By definition, $T$ is subnormal, and, as a consequence, it is hyponormal. We can check that for all $u\in l^2(\mathbb{N})$, we have $\langle u , [T^*,T] u \rangle = |u_0|^2 \geqslant 0$. Hyponormality has been subject to various developments since it was first introduced, see for example \cite{conway1988,curto2001,douglas1989,douglas1992,farenick1995,mccullough1989,xia1983,curto1987,curto1988}.

In this paper, we will define a shift operator for each variable (optimization variable or variable of interpolation function). We will thus rely on an extension of hyponormality to multiple operators. Following the definition of Athavale \cite{athavale1988}, operators $T_1, \hdots, T_n \in \mathcal{B}(\mathcal{H})$ are \textit{jointly hyponormal} if
\begin{equation}
\label{eq:hypo}
\begin{pmatrix}
[T_1^*,T_1] & [T_2^*,T_1] & \hdots & [T_n^*,T_1] \\
[T_1^*,T_2] & [T_2^*,T_2] & \hdots & [T_n^*,T_2] \\
\vdots & \vdots & & \vdots \\
[T_1^*,T_n] & [T_2^*,T_n] & \hdots & [T_n^*,T_n]
\end{pmatrix} \succcurlyeq 0
\end{equation}
in the sense that for all $u_1, \hdots u_n \in \mathcal{H}$, there holds
\begin{equation}
\sum\limits_{i,j=1}^n \langle u_i, [T_j^*,T_i] u_j \rangle \geqslant 0.
\end{equation}
This is equivalent to\footnote{This can seen with the Schur complement. If $A,B,C \in \mathcal{B}(\mathcal{H})$ and $A$ is invertible, then
$$ 
\begin{pmatrix}
A & B^* \\
B & C\hphantom{^*} 
\end{pmatrix}
=
\begin{pmatrix}
I & 0 \\
BA^{-1} & I 
\end{pmatrix}
\begin{pmatrix}
A & 0 \\
0 & C - BA^{-1}B^*
\end{pmatrix}
\begin{pmatrix}
I & A^{-1}B^* \\
0 & I
\end{pmatrix}.
$$}
\begin{equation}
\begin{pmatrix}
I & T_1^* & T_2^* & \hdots & T_n^*\\
T_1 & T_1^*T_1 & T_2^*T_1 & \hdots & T_n^* T_1 \\
T_2 & T_1^*T_2 & T_2^*T_2 & \hdots & T_n^* T_2 \\
\vdots & \vdots & \vdots & & \vdots \\
T_1 & T_1^*T_n & T_2^*T_n & \hdots & T_n^* T_n
\end{pmatrix} \succcurlyeq 0
\end{equation}
%
%
but it is stronger than requiring $t_1 T_1 + \hdots + t_n T_n $ to be hyponormal for all $t_1,\hdots,t_n \in \mathbb{C}$, that is
\begin{equation}
\sum_{i,j} ~ \overline{t_i} t_j ~ T_i^* T_j ~ \succcurlyeq ~ 0.
\end{equation}
The key ingredient to making the notion of joint hyponormality relevant for practical purposes is that in finite dimensions, the trace of $[T_i^*,T_i]$ is equal to zero, and thus
\begin{equation}
[T_i^*,T_i] \succcurlyeq 0 ~~~  \Longleftrightarrow ~~~  [T_i^*,T_i] = 0.
\end{equation}
This brings about the following equivalences in finite dimension which will be used throughout the paper:
\begin{enumerate}
\item $T_1,\hdots,T_n$ are jointly hyponormal.
\item The inequality in \eqref{eq:hypo} is an equality. 
\item $T_1,\hdots,T_n,T_1^*,\hdots,T_n^*$ commute pair-wise.
\item For all pairs $(T_i,T_j)$ with $i<j$, it holds that
\begin{equation}
\begin{pmatrix}
I & T_i^* & T_j^* \\
T_i & T_i^*T_i & T_i^*T_j  \\
T_j & T_i^*T_j & T_j^*T_j
\end{pmatrix} \succcurlyeq 0.
\end{equation}
\item There exists a unitary matrix $U$ (i.e. $U^*U=UU^*=I$ where $I$ denotes the identity) and diagonal matrices $D_1, \hdots, D_n$ such that 
\begin{equation}
\left\{
\begin{array}{c}
T_1 = UD_1U^*\\
T_2 = UD_2U^*\\
\vdots \\
T_n = UD_nU^*
\end{array}
\right.
\end{equation}
\end{enumerate}
These equivalences bring a new perspective on the moment problem to the best of our knowledge.  They were implicitely used in our preprint \cite{josz-molzahn-2015}. It allows one to deal with a kind of truncated moment problem that arises in practice, but which has not been given much attention to from a theoretical perspective. 

\section{Truncated moment problem}
\label{sec:Truncated moment problem}

The moment problem is an old yet active subject of research \cite{haviland1936,akhiezer-1965,landau1987,curto-2000,curtoquad-2000,curto-2010,putinar-1988,stochel-2001,curto-1996,curto-2005,kimsey2013,kimsey2016,stochel1998,putinar1992,curto1993,mourrain2009,helton2012,bucero2016,schmudgen2016,laurent2005,agler2002}. 
We now focus on a kind of moment problem that arises in practice. Consider an integer $d \in \mathbb{N}$ and an infinite sequence of complex numbers $(y_{\alpha,\beta})_{\alpha,\beta \in \mathbb{N}^n}$. In the context of polynomial optimization, we are interested in knowing whether there exists a positive Borel measure $\mu$ supported on the semi-algebraic set $K$ such that 
\begin{equation}
 y_{\alpha,\beta} = \int_{\mathbb{C}^n} \bar{z}^\alpha z^\beta d\mu ~, ~~~~ \text{for all} ~~ |\alpha|,|\beta| \leqslant d.
\end{equation}
Should it exist, we are also interested in computing the measure from its moments $(y_{\alpha,\beta})_{|\alpha|,|\beta|\leqslant d}$. In the context of exponential interpolation, a complex-valued measure supported on $\mathbb{C}^n$ is presumed to exist such that
\begin{equation}
 y_{\alpha,\beta} = \int_{\mathbb{C}^n} z^{\alpha+\beta} d\mu ~, ~~~~ \text{for all} ~~ |\alpha|,|\beta| \leqslant d
\end{equation}
and we are solely interested in computing the measure from its moments.

In previous work \cite{curto-2000,curto-1996,curto-2005}, the authors raise the question of whether there exists a positive Borel measure $\mu$ supported on the semi-algebraic set $K$ such that 
\begin{equation}
 y_{\alpha,\beta} = \int_{\mathbb{C}^n} \bar{z}^\alpha z^\beta d\mu ~, ~~~~ \text{for all} ~~ |\alpha|+|\beta| \leqslant 2d.
\end{equation}
This corresponds to a degree truncation, as opposed to a square truncation as above. The discrepancy is illustrated in Figure \ref{fig:trunc} below. It calls for different notions of moment matrices, as shown in Figure \ref{fig:moment}. The moment matrix resulting from square truncation is referred to as \textit{pruned} complex moment matrix in \cite{lasserre2007}, but the moment problem with square truncation is not considered. The moment problem with degree truncation is equivalent in some sense (see \cite[Theorem 5.2]{curto-2005}) to the even-dimensional real moment problem, i.e. where we seek a measure on a real semi-algebraic set such that
\begin{equation}
 y_{\alpha} = \int_{\mathbb{R}^{2n}} x^\alpha d\mu ~, ~~~~ \text{for all} ~~ |\alpha| \leqslant 2d
\end{equation}
given a real sequence $(y_{\alpha})_{\alpha \in \mathbb{N}^{2n}}$. In contrast, the square truncation we consider captures the real truncated moment problem as the special case (both even- and odd-dimensional). It corresponds to the case where the moment data forms a Hankel matrix (see \cite[Corollary 3.9]{josz-molzahn-2015} and Theorem \ref{th:toeplitz} below).

\begin{figure}[!h]
	\centering
	\includegraphics[width=.38\textwidth]{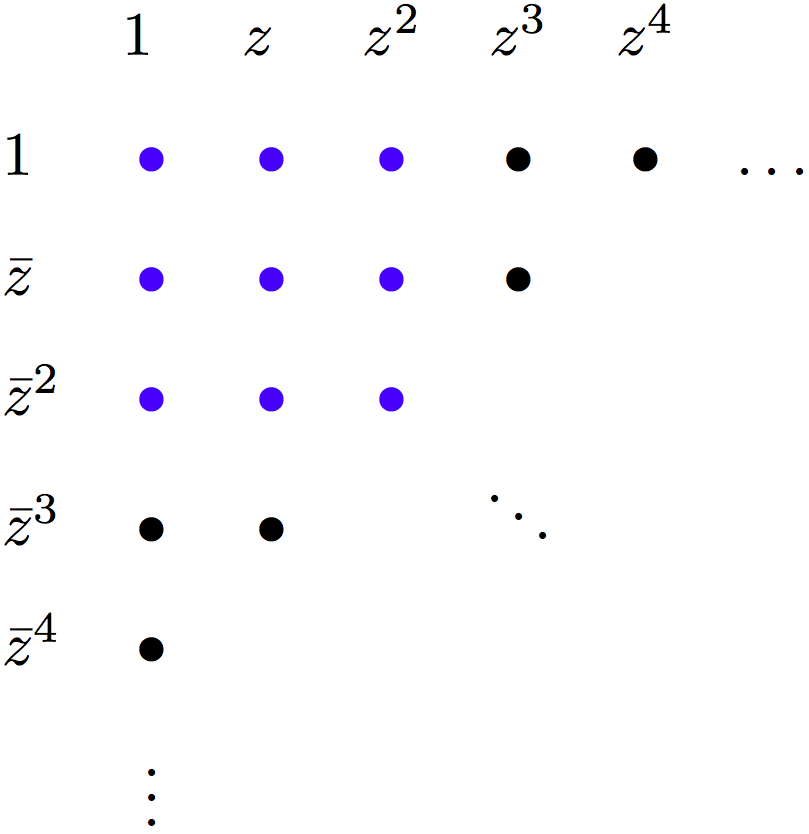}
	\caption{Square truncation in blue vs. degree truncation in black and blue}
	 \label{fig:trunc}
\end{figure}

\begin{figure}[!h]
	\centering
	\includegraphics[width=.75\textwidth]{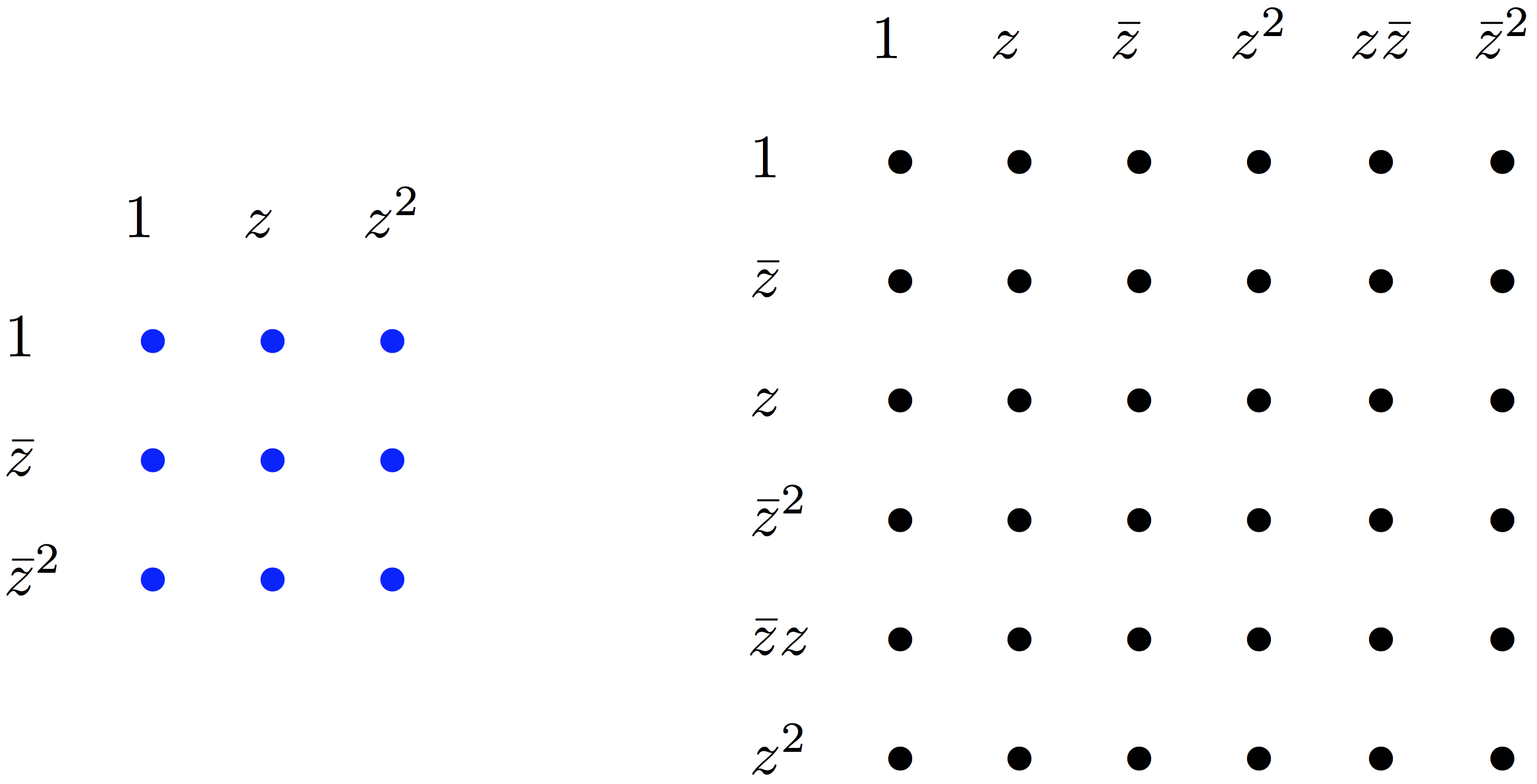}
	\caption{Moment matrix in the case of square truncation vs. moment matrix in the case of degree truncation}
	 \label{fig:moment}
\end{figure}

In Theorem \ref{th:trunc} and Theorem \ref{th:toeplitz} below, solutions to the moment problem with square truncation are given using the notion of hyponormality presented in Section \ref{sec:Joint hyponormality}. The proofs are constructive and the algorithm proposed in Section \ref{sec:Algorithm} replicates each step of the proofs. To introduce the results, we need some notation. In the multivariate case, we say that truncated data is Hermitian if $\overline{y_{\beta,\alpha}} = y_{\alpha,\beta}$ for all $|\alpha|,|\beta| \leqslant d$. Given an integer $r\in \mathbb{N}$, a $r$-atomic measure is sum of $r$ Dirac measures in $r$ distinct points (called \textit{atoms}) with nonzero weights. It is said to be positive if all the weights are positive, and supported on a set if the atoms lie in the set. In the setting of interpolation which we treat after Theorem \ref{th:trunc} and Theorem \ref{th:toeplitz}, the weights are complex numbers.

\begin{theorem}{\cite[Theorem 3.8]{josz-molzahn-2015}}
\label{th:trunc}
\normalfont
\textit{
Consider some complex numbers $(y_{\alpha,\beta})_{|\alpha|,|\beta| \leqslant d}$ with $d \geqslant d_K$ forming a Hermitian matrix. Assume that $K$ contains the constraint $\sum_{k=1}^n |z_k|^2 \leqslant R^2$ for some radius $R \geqslant 0$}.
\textit{Then there exists a positive} $ \text{rank} M_{d-d_K}(y)$\textit{-atomic measure} $\mu$ \textit{supported on} $K$ \textit{such that}
\begin{equation}
\label{eq:atomic_rep}
 y_{\alpha,\beta} = \int_{\mathbb{C}^n} \bar{z}^\alpha z^\beta d\mu ~, ~~~~ \text{for all} ~~ |\alpha|,|\beta| \leqslant d
\end{equation}
\textit{if and only if}
\begin{enumerate}
\item $M_d(y) \succcurlyeq 0$ and $M_{d-k_i}(g_iy) \succcurlyeq 0, ~ i = 1 \hdots m$;
\item $\text{rank} M_{d}(y) = \text{rank} M_{d-d_K}(y)$;
\item $ 
\begin{pmatrix} 
M_{d-d_K}(y) & M_{d-d_K}(\bar{z}_i y) & M_{d-d_K}(\bar{z}_j y) \\ 
M_{d-d_K}(z_i y) & M_{d-d_K}(|z_i|^2 y)  & M_{d-d_K}(\bar{z}_j z_i y) \\
M_{d-d_K}(z_j y) & M_{d-d_K}(\bar{z}_i z_j y) & M_{d-d_K}(|z_j|^2 y)
\end{pmatrix} 
\succcurlyeq 0 , ~ \forall 1 \leqslant i < j \leqslant n$.
\end{enumerate}
\textit{Moreover, $\mu$ is the unique} $ \text{rank} M_{d-d_K}(y)$\textit{-atomic measure} \textit{satisfying} \eqref{eq:atomic_rep}, \textit{and for each $1\leqslant i \leqslant m$, it has exactly} $\text{rank} M_{d}(y) - \text{rank} M_{d-d_K}(g_i y)$ \textit{atoms that are zeros of $g_i$. In the univariate case $n=1$,} Condition 3 \textit{must be replaced by}
\begin{equation}
\label{eq:uni}
\begin{pmatrix} 
M_{d-d_K}(y) & M_{d-d_K}(\bar{z} y) \\ 
M_{d-d_K}(z y) & M_{d-d_K}(|z|^2 y)
\end{pmatrix}
\succcurlyeq 0.
\end{equation}
\end{theorem}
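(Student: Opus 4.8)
The plan is to prove the two directions separately, carrying out the multivariate argument in full and noting that the univariate case is the same computation with a single operator. Throughout I write $r := \mathrm{rank}\, M_{d-d_K}(y)$ and, for an atom $z^{(k)}$, I abbreviate its evaluation vector by $v_k := ((z^{(k)})^\beta)_{|\beta|\leqslant d}$, so that $\bar v_k v_k^T$ is a Hermitian positive semidefinite rank-one matrix.

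\emph{Necessity.} Suppose a positive $r$-atomic measure $\mu = \sum_{k=1}^r \lambda_k \delta_{z^{(k)}}$ supported on $K$ reproduces the moments. I would substitute $y_{\alpha,\beta} = \sum_k \lambda_k \overline{(z^{(k)})^\alpha}(z^{(k)})^\beta$ into each matrix of the statement and recognise it as a Gram-type sum. This gives $M_d(y) = \sum_k \lambda_k \bar v_k v_k^T \succcurlyeq 0$ and $M_{d-k_i}(g_i y) = \sum_k \lambda_k g_i(z^{(k)}) \bar v_k v_k^T \succcurlyeq 0$ (using $g_i(z^{(k)})\geqslant 0$ on $K$), which is Condition 1. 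Condition 2 follows because $M_d(y)$ is a sum of $r$ rank-one terms, hence has rank at most $r$, while it contains the rank-$r$ principal submatrix $M_{d-d_K}(y)$. For Condition 3 I would write the $3\times 3$ block matrix as $\sum_k \lambda_k\, (c_k c_k^*) \otimes (\bar v_k v_k^T)$ with $c_k := (1, z_i^{(k)}, z_j^{(k)})^T$, a sum of Kronecker products of positive semidefinite rank-one matrices, and therefore positive semidefinite.

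\emph{Sufficiency.} Assume Conditions 1--3. Since the truncated moment matrices have nondecreasing rank in the truncation order and $M_{d-d_K}(y)$ is a principal submatrix of $M_d(y)$ of the same rank $r$, flatness propagates: $\mathrm{rank}\, M_{d-1}(y) = \cdots = \mathrm{rank}\, M_{d-d_K}(y) = r = \mathrm{rank}\, M_d(y)$. Factoring $M_d(y) = B^*B$ by Cholesky (Condition 1) and writing $b_\beta$ for the columns of $B$, single-level flatness lets me define shift operators $T_i$ on the $r$-dimensional space $H := \mathrm{span}\{b_\beta\}$ by $T_i b_\beta := b_{\beta+e_i}$ for $|\beta|\leqslant d-1$. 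A direct computation identifies the blocks of Condition 3 with $\langle b_\alpha, b_\beta\rangle$, $\langle b_\alpha, T_i b_\beta\rangle$, $\langle b_\alpha, T_i^*T_i b_\beta\rangle$, and $\langle b_\alpha, T_j^*T_i b_\beta\rangle$; since $\{b_\beta : |\beta|\leqslant d-d_K\}$ spans $H$, Condition 3 is precisely the pairwise joint-hyponormality inequality (item 4 of Section 3) for $T_i,T_j$. By the finite-dimensional equivalences of Section 3, the $T_i$ are commuting normal operators admitting a common orthonormal eigenbasis $u_1,\dots,u_r$ with $T_i u_k = \lambda_{i,k} u_k$.

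I would then set $z^{(k)} := (\lambda_{1,k},\dots,\lambda_{n,k})$ and expand $b_0 = \sum_k \gamma_k u_k$. Because $b_\beta = T^\beta b_0$ for all $|\beta|\leqslant d$ and these vectors span $H$, the vector $b_0$ is cyclic, which forces every $\gamma_k \neq 0$ and the tuples $z^{(k)}$ to be pairwise distinct. Expanding $y_{\alpha,\beta} = \langle T^\alpha b_0, T^\beta b_0\rangle$ in the orthonormal eigenbasis yields $y_{\alpha,\beta} = \sum_k |\gamma_k|^2 \overline{(z^{(k)})^\alpha}(z^{(k)})^\beta$ for all $|\alpha|,|\beta|\leqslant d$, so $\mu := \sum_k |\gamma_k|^2 \delta_{z^{(k)}}$ is a positive $r$-atomic representing measure. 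To place the atoms in $K$ I would feed this representation back into the localizing matrices, obtaining $M_{d-k_i}(g_iy) = \sum_k |\gamma_k|^2 g_i(z^{(k)})\, \bar v_k v_k^T$; as the $v_k$ at level $d-k_i\geqslant d-d_K$ are linearly independent, testing against a dual basis shows each coefficient $|\gamma_k|^2 g_i(z^{(k)})$ is nonnegative, whence $g_i(z^{(k)})\geqslant 0$. The same computation gives the atom count, since $\mathrm{rank}\, M_{d-k_i}(g_iy)$ equals the number of atoms with $g_i(z^{(k)})>0$. Uniqueness follows because the atoms of any $r$-atomic representing measure must be joint eigenvalues of the operators $T_i$ (which $y$ determines), and the weights are then pinned down by an invertible Vandermonde system. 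The univariate case is identical with the single operator $T$: by the Schur complement of the footnote, the $2\times 2$ condition is equivalent to $[T^*,T]\succcurlyeq 0$, i.e. hyponormality, which in finite dimension makes $T$ normal and hence unitarily diagonalizable.

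\emph{Main obstacle.} I expect the delicate step to be the well-definedness of the shift operators $T_i$ from flatness, namely that any linear dependence among the columns $b_\beta$ of degree $\leqslant d-1$ persists after shifting by $e_i$; this flat-extension argument is the technical heart, and once the $T_i$ exist, joint hyponormality (Condition 3) turns them into commuting normal operators and the spectral theorem produces the atoms almost mechanically. A secondary point demanding care is that $M_{d-d_K}(y)$ is singular, so the passage between the scalar block matrix of Condition 3 and the operator inequality for $T_i,T_j$ must be justified through the spanning property of $\{b_\beta\}$ rather than by inverting a Gram matrix.
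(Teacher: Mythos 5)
Your overall route coincides with the paper's: Cholesky-factorize $M_d(y)=X^*X$, use flatness to define shift operators $T_k x_\alpha = x_{\alpha+e_k}$, read Condition~3 as joint hyponormality of the shifts, invoke the finite-dimensional equivalences of Section~3 to diagonalize them simultaneously by a unitary, and read off atoms and weights from the eigenvalues and from $|x_0^*p_k|^2$. The necessity direction, the atom count, and the uniqueness argument via the two appendix lemmas also match.

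There is, however, one genuine gap, and you have flagged it yourself without closing it: the well-definedness of the shifts. You write that ``single-level flatness lets me define $T_i b_\beta := b_{\beta+e_i}$'' and then defer the verification that a linear dependence $\sum_{|\alpha|\leqslant d-1} u_\alpha b_\alpha = 0$ forces $\sum_{|\alpha|\leqslant d-1} u_\alpha b_{\alpha+e_i}=0$ to a closing remark. Rank equality alone does not give this for the square-truncated complex moment matrix, because its entries $y_{\alpha,\beta}$ carry no Hankel-type structure that would let the classical recursively-generated argument go through. The paper closes this step using the hypothesis you never invoke, namely that $K$ contains the ball constraint $\sum_{k=1}^n |z_k|^2 \leqslant R^2$: positivity of the localizing matrix $M_{d-1}\bigl((R^2-\sum_k|z_k|^2)\,y\bigr)$ (part of Condition~1) yields
\begin{equation}
\Bigl\| \sum_{|\alpha|\leqslant d-1} u_\alpha \, x_{\alpha+e_k} \Bigr\| \;\leqslant\; R \, \Bigl\| \sum_{|\alpha|\leqslant d-1} u_\alpha \, x_{\alpha} \Bigr\|,
\end{equation}
so dependencies are preserved under shifting and each $T_k$ is a well-defined (indeed bounded by $R$) operator on $\mathcal{H}=\mathrm{span}(x_\alpha)_{|\alpha|\leqslant d-1}$. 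Since this hypothesis appears explicitly in the theorem statement precisely to make the shifts exist (see also Example~\ref{eg:exist}, where Condition~1 fails for every $R$ and no shift exists), a proof that never uses it cannot be complete. Everything downstream of this point in your argument is sound.
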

\begin{proof}
We provide a sketch of the proof. We focus on the ``if" part, as it is important in the sequel. The positive semidefinite moment matrix of rank $r:=\text{rank} M_{d}(y)$ can be factorized as $M_d(y) = X^* X$ where $(\cdot)^*$ stands for adjoint, i.e. conjugate transpose. This can be achieved with the Cholesky factorization \cite{cholesky1910}. The rows of $X =: (x_{i,\alpha})$ are indexed by $1 \leqslant i \leqslant r$ and the columns are $X$ are indexed by $|\alpha| \leqslant d$. Thanks to Conditions 1 and 2, there exists complex matrices $T_1,\hdots,T_n$ of order $r$, called \textit{shift operators}, such that for each $1\leqslant k \leqslant n$, we have 
\begin{equation}
T_k x_\alpha = x_{\alpha+e_k}~,~~\forall |\alpha|\leqslant d-1.
\end{equation}
Here, $x_\alpha$ denotes the $\alpha$-column of $X$ and $e_k$ is the row vector of size $n$ that contains only zeros apart from 1 in position $k$. We now explain why the shift operators exist. Consider the finite-dimensional Hilbert space $\mathcal{H} := \text{span} ( x_{\alpha} )_{|\alpha|\leqslant d}$. Condition 2 implies that $\mathcal{H} = \text{span} ( x_{\alpha} )_{|\alpha|\leqslant d-1}$. Given some complex numbers $(u_{\alpha})_{|\alpha|\leqslant d-1}$, Condition 1 implies that
\begin{equation}
\left\| \sum\limits_{|\alpha|\leqslant d-1} u_{\alpha} x_{\alpha+e_k} \right\| ~~ \leqslant ~~  R ~ \left\| \sum\limits_{|\alpha|\leqslant d-1} u_{\alpha} x_{\alpha} \right\|
\end{equation}
where $\| x \| := \sqrt{x^*x}$ denote the 2-norm of a vector $x \in \mathbb{C}^r$.
As a result, given two possibly different sets of coefficients $(u_{\alpha})_{|\alpha|\leqslant d-1}$ and $(v_{\alpha})_{|\alpha|\leqslant d-1}$, if $\sum_{|\alpha| \leqslant d-1} u_\alpha x_{\alpha} = \sum_{|\alpha| \leqslant d-1} v_\alpha x_{\alpha}$, then $\sum_{|\alpha| \leqslant d-1} u_\alpha x_{\alpha+e_k} = \sum_{|\alpha| \leqslant d-1} v_\alpha x_{\alpha+e_k}$. In other words, each element of $\mathcal{H}$ has a unique image by $T_k$, which makes the shift well-defined. In fact, it is bounded by the radius $R$. 

Condition 3 implies that for all $1 \leqslant i < j \leqslant n$, we have
\begin{equation}
\begin{pmatrix}
I & T_i^* & T_j^* \\
T_i & T_i^*T_i & T_i^*T_j  \\
T_j & T_i^*T_j & T_j^*T_j
\end{pmatrix} \succcurlyeq 0.
\end{equation}
As discussed in Section \ref{sec:Joint hyponormality}, this makes the shifts jointly hyponormal. Thus there exists a unitary matrix $P$ such that $T_k = PD_kP^*$ where $D_k =: \text{diag}(d_{k1},\hdots,d_{kr})$ is a diagonal matrix for each $1\leqslant k \leqslant n$. Bearing in mind that $M_d(y) = X^*X$, we have for all $|\alpha|,|\beta|\leqslant d$ that
\begin{equation}
\begin{array}{rcl}
y_{\alpha,\beta} & = & x_\alpha^* x_\beta \\[.5em]
& = & (T^\alpha x_0)^* (T^\beta x_0 )\\[.5em]
& = & x_0^* (T^\alpha)^* T^\beta x_0 \\[.5em]
& = & x_0^* (PD^\alpha P^*)^* PD^\beta P^* x_0 \\[.5em]
& = & x_0^* P \overline{D}^\alpha P^*P D^\beta P^* x_0 \\[.5em]
& = & x_0^* P \overline{D}^\alpha D^\beta P^* x_0 \\[.5em]
& = & x_0^* \left( \sum\limits_{k=1}^r p_k \overline{d}_k^\alpha d_k^\beta p_k^* \right) x_0 \\[1em]
& = & \sum\limits_{k=1}^r x_0^* p_k p_k^* x_0 ~ \bar{d}_k^\alpha d_k^\beta \\[1em]
& = & \sum\limits_{k=1}^r |x_0^* p_k|^2 ~ \bar{d}_k^\alpha d_k^\beta
\end{array}
\end{equation}
where $P =: (p_1 \hdots p_r)$ denote the columns of $P$ and $d_j := (d_{1j},\hdots,d_{nj})$.
As a result, eigenvalues of the shift operators correspond to the support of a measure, and their eigenvectors yield the weights of a measure. Precisely, the following measure 
\begin{equation}
\mu = \sum_{k=1}^r  |x_0^* p_k|^2 ~ \delta_{d_k}
\end{equation}
solves the truncated problem up to order $d$, i.e. \eqref{eq:atomic_rep}. The uniqueness of the measure can easily be deduced from Lemma \ref{lemma:ind} and Lemma \ref{lemma:range} in the Appendix.
\end{proof}

We will say that the truncated moment data $(y_{\alpha,\beta})_{|\alpha|,|\beta| \leqslant d}$ is \textit{hyponormal} if it satisfies Condition 3 of Theorem \ref{th:trunc}. Indeed, this condition corresponds to the joint hyponormality of the shift operators. We propose to enforce the joint hyponormality of the shift operators in the complex hierarchy by requiring the truncated data to be hypornormal. This yields the following primal problem
\begin{equation}
\begin{array}{l}
\inf_y        L_y(f) ~~~ \text{s.t.}~~~ M_d(y) \succcurlyeq 0 ~~,~~ M_{d-k_j}(g_iy) \succcurlyeq 0, ~i = 1, \hdots, m, ~~~\text{and} \\[1em]
\begin{pmatrix} 
M_{d-d_K}(y) & M_{d-d_K}(z_i y) & M_{d-d_K}(z_j y) \\ 
M_{d-d_K}(\bar{z}_i y) & M_{d-d_K}(|z_i|^2 y)  & M_{d-d_K}(z_j \bar{z}_i y) \\
M_{d-d_K}(\bar{z}_j y) & M_{d-d_K}( z_i \bar{z}_j y) & M_{d-d_K}(|z_j|^2 y)
\end{pmatrix} 
\succcurlyeq 0 , ~ \forall 1 \leqslant i < j \leqslant n
\end{array}
\end{equation}
and its dual counterpart
\begin{equation}
\begin{array}{c}
\sup_{\lambda \in \mathbb{R}, \sigma_j \in \Sigma_d[z]} \lambda ~~~ \text{s.t.} ~~~ f - \lambda = \sigma_0 + \sigma_1 g_1 + \hdots + \sigma_m g_m + \hdots \\[1em]
 \sum\limits_{1 \leqslant i < j \leqslant n} \sigma^{11}_{ij} + \sigma^{12}_{ij} \bar{z}_i + \sigma^{21}_{ij} z_i + \sigma^{13}_{ij} \bar{z}_j + \sigma^{31}_{ij} z_j + \hdots \\[1.2em]
\sigma^{22}_{ij} |z_i|^2 + \sigma^{23}_{ij} \bar{z}_j z_i + \sigma^{32}_{ij} \bar{z}_i z_j + \sigma^{33}_{ij}|z_j|^2 \\[1em]
\begin{pmatrix} 
\sigma^{11}_{ij} & \sigma^{12}_{ij} & \sigma^{13}_{ij} \\[.5em] 
\sigma^{21}_{ij} & \sigma^{22}_{ij}  & \sigma^{23}_{ij} \\[.5em]
\sigma^{31}_{ij} & \sigma^{32}_{ij} & \sigma^{33}_{ij}
\end{pmatrix} 
\succcurlyeq 0 , ~ \forall 1 \leqslant i < j \leqslant n,\\[2.5em]
\sigma^{11}_{ij}, \sigma^{12}_{ij}, \sigma^{13}_{ij},
\sigma^{21}_{ij}, \sigma^{22}_{ij} , \sigma^{23}_{ij},
\sigma^{31}_{ij}, \sigma^{32}_{ij}, \sigma^{33}_{ij} \in \mathbb{R}_{d-d_k}[z,\bar{z}]
\end{array}
\end{equation}
where a polynomial $p$ belongs to $\mathbb{R}_{d}[z,\bar{z}]$ if it is of the form $\sum_{|\alpha|,|\beta|\leqslant d} p_{\alpha,\beta} \bar{z}^\alpha z^\beta$ where $p_{\alpha,\beta} \in \mathbb{C}$. Recall that for complex polynomials, the coefficients $p_{\alpha,\beta}$ are unique (see Section 3.2, footnote 4 in \cite{josz-molzahn-2015} for a comparison with real polynomials). Thus the complex polynomial $p$ can be identified with the Hermitian matrix $(p_{\alpha,\beta})_{|\alpha|,|\beta|\leqslant d}$. This is what we do in the semidefinite constraints in the above dual problem.
In Example \ref{eg:ellipsebis}, we apply this hierarchy of relaxations to a bivariate optimization problem. Compared to the complex moment/sum-of-squares hierarchy, it solves this particular problem at a lower order.

There exists several identifiable cases where the shift operators are naturally jointly hyponormal. It was noticed in \cite[Corollary 3.9]{josz-molzahn-2015} that it is the case when the truncated data forms a Hermitian Hankel matrix (thus real-valued). This corresponds exactly to the truncated data generated by the Lasserre hierarchy for real polynomial optimization. In that case, the shift operators are real symmetric. Below, we show that hyponormality is also guaranteed if we assume that the truncated data forms a Toeplitz matrix. This result has not been presented in the literature to the best of our knowledge. In the multivariate case, we say that the truncated data is Hankel if $y_{\alpha,\beta} = y_{\gamma,\delta}$ for all $|\alpha|,|\beta|,|\gamma|,|\delta| \leqslant d$ such that $\alpha+\beta=\gamma+\delta$, and we say that the truncated data is Toeplitz if $y_{\alpha,\beta} = y_{\gamma,\delta}$ for all $|\alpha|,|\beta|,|\gamma|,|\delta| \leqslant d$ such that $\alpha-\beta=\gamma-\delta$. In other words, $y_{\alpha,\beta}$ only depends on $\alpha+\beta$ in a Hankel matrix, and it only depends on $\alpha-\beta$ in a Toeplitz matrix.
\begin{theorem}
\label{th:toeplitz}
\normalfont
\textit{
Consider some complex numbers $(y_{\alpha,\beta})_{|\alpha|,|\beta| \leqslant d}$ with $d\geqslant d_K$ forming either a Hermitian Toeplitz matrix or a Hermitian Hankel matrix.}
\textit{There exists a positive} $ \text{rank} M_{d-d_K}(y)$\textit{-atomic measure} $\mu$ \textit{supported on} $K$ \textit{such that}
\begin{equation}
\label{eq:atomic_repbis}
 y_{\alpha,\beta} = \int_{\mathbb{C}^n} \bar{z}^\alpha z^\beta d\mu ~, ~~~~ \text{for all} ~~ |\alpha|,|\beta| \leqslant d
\end{equation}
\textit{if and only if}
\begin{enumerate}
\item $M_d(y) \succcurlyeq 0$ and $M_{d-d_K}(g_iy) \succcurlyeq 0, ~ i = 1 \hdots m$;
\item $\text{rank} M_{d}(y) = \text{rank} M_{d-d_K}(y)$.
\end{enumerate}
\textit{Moreover, $\mu$ is the unique} $ \text{rank} M_{d-d_K}(y)$\textit{-atomic measure} \textit{satisfying \eqref{eq:atomic_repbis}, and for each $1\leqslant i \leqslant m$, the measure $\mu$ has exactly} $\text{rank} M_{d}(y) - \text{rank} M_{d-d_K}(g_i y)$ \textit{atoms that are zeros of $g_i$.}
\end{theorem}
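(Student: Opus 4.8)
The plan is to reuse the machinery in the proof of Theorem \ref{th:trunc}. There, the sphere constraint is invoked only to make the shift operators $T_1,\dots,T_n$ well defined and bounded and to secure Condition~3; the point of the present theorem is that the Hankel or Toeplitz structure supplies all three for free, so no sphere constraint is needed. The ``only if'' direction is routine: if $\mu$ is a positive $r$-atomic measure supported on $K$ with $r=\mathrm{rank}\,M_{d-d_K}(y)$, then $M_d(y)\succcurlyeq 0$ and $M_{d-d_K}(g_iy)\succcurlyeq 0$ because their quadratic forms are $\int|q|^2\,d\mu$ and $\int g_i|q|^2\,d\mu\geqslant 0$, while $\mathrm{rank}\,M_d(y)=r$ follows from $r=\mathrm{rank}\,M_{d-d_K}(y)\leqslant\mathrm{rank}\,M_d(y)\leqslant r$. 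So the content is the ``if'' direction.

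Assume Conditions 1 and 2, factorize $M_d(y)=X^*X$, and set $\mathcal{H}:=\mathrm{span}(x_\alpha)_{|\alpha|\leqslant d}$; Condition 2 gives $\mathcal{H}=\mathrm{span}(x_\alpha)_{|\alpha|\leqslant d-1}$. First I would make $T_kx_\alpha=x_{\alpha+e_k}$ well defined. Writing $y_{\alpha,\beta}=h_{\alpha+\beta}$ in the Hankel case, the identity $\langle x_\gamma,\sum_\alpha u_\alpha x_{\alpha+e_k}\rangle=\langle x_{\gamma+e_k},\sum_\alpha u_\alpha x_\alpha\rangle$ for $|\gamma|\leqslant d-1$ shows that $\sum u_\alpha x_\alpha=0$ implies $\sum u_\alpha x_{\alpha+e_k}=0$; writing $y_{\alpha,\beta}=t_{\alpha-\beta}$ in the Toeplitz case, $\langle T_kx_\alpha,T_kx_\beta\rangle=t_{\alpha-\beta}=\langle x_\alpha,x_\beta\rangle$ shows that $T_k$ is an isometry, which is even stronger. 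The same two computations fix the type of the shifts: in the Hankel case $\langle T_ix_\alpha,x_\beta\rangle=h_{\alpha+\beta+e_i}=\langle x_\alpha,T_ix_\beta\rangle$, so each $T_i$ is self-adjoint; in the Toeplitz case each $T_i$, being an isometry in finite dimension, is unitary.

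It then remains to show that the shifts commute, for a commuting family of self-adjoint (resp. normal) operators is simultaneously diagonalizable, which is exactly the joint hyponormality of Condition 3 of Theorem \ref{th:trunc} (Section \ref{sec:Joint hyponormality}). In the Hankel case self-adjointness delivers this with no boundary terms, since $\langle x_\gamma,T_iT_jx_\alpha\rangle=\langle T_ix_\gamma,T_jx_\alpha\rangle=\langle x_{\gamma+e_i},x_{\alpha+e_j}\rangle=h_{\gamma+\alpha+e_i+e_j}$ is symmetric in $i,j$ for all $|\gamma|,|\alpha|\leqslant d-1$. In the Toeplitz case unitarity reduces joint hyponormality to plain commutation, because for unitary $T_j$ one has $T_j^*T_i=T_iT_j^*\Leftrightarrow T_iT_j=T_jT_i$; and commutation is immediate on $\mathrm{span}(x_\alpha)_{|\alpha|\leqslant d-2}$, where $T_iT_jx_\alpha=x_{\alpha+e_i+e_j}=T_jT_ix_\alpha$, so everything is settled when $d_K\geqslant 2$. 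The main obstacle is the case $d_K=1$ with $|\alpha|=d-1$ (which does not arise for Hankel, thanks to self-adjointness): there $T_iT_jx_\alpha$ forces $T_i$ to act on the degree-$d$ vector $x_{\alpha+e_j}$, where the shift formula fails and one must use the flatness relations. I would resolve this by a one-step flat extension, claiming that a flat Hermitian Toeplitz positive semidefinite $M_d(y)$ extends to such an $M_{d+1}(y)$ of equal rank, obtained by extending the (already unitary) shifts isometrically; on the extension the shift formula holds up to degree $d$, so the interior computation above now covers all of $\mathcal{H}$ and restricts to $T_iT_j=T_jT_i$ for the original shifts. Proving that this extension exists and preserves the Toeplitz pattern is the one genuinely new technical step, and the appendix's range and independence lemmas (Lemma~\ref{lemma:range}, Lemma~\ref{lemma:ind}) are the natural tools for it.

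With joint hyponormality established, the extraction copies Theorem \ref{th:trunc}: the simultaneous diagonalization $T_k=PD_kP^*$ gives $\mu=\sum_k|x_0^*p_k|^2\,\delta_{d_k}$ reproducing $y_{\alpha,\beta}$ for $|\alpha|,|\beta|\leqslant d$, and in the Toeplitz case $\mu$ is supported on the torus, since the eigenvalues of the unitary shifts have modulus one. The atoms lie in $K$ because the quadratic form of $M_{d-d_K}(g_iy)\succcurlyeq 0$ equals $\sum_k g_i(d_k)\,|x_0^*p_k|^2\,|q(d_k)|^2$, so choosing $q$ of degree $\leqslant d-d_K$ that peaks at a single atom (the $r$ distinct atoms are separated by such polynomials, as $\mathrm{rank}\,M_{d-d_K}(y)=r$) forces $g_i(d_k)\geqslant 0$; the same form shows $\mathrm{rank}\,M_{d-d_K}(g_iy)$ counts the atoms with $g_i(d_k)\neq 0$, giving the stated number $\mathrm{rank}\,M_d(y)-\mathrm{rank}\,M_{d-d_K}(g_iy)$ of atoms on $\{g_i=0\}$. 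Uniqueness of $\mu$ follows from the appendix lemmas exactly as in Theorem \ref{th:trunc}.
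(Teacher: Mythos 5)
Your proposal follows the same route as the paper's proof: the Toeplitz structure makes the shifts isometric and hence unitary, the Hankel structure makes them self-adjoint, joint hyponormality then reduces to pairwise commutation of the shifts, and the extraction, support, atom-counting and uniqueness arguments are inherited from Theorem \ref{th:trunc} exactly as the paper does. Your Hankel computation, $\langle x_\gamma, T_iT_jx_\alpha\rangle = \langle T_ix_\gamma,T_jx_\alpha\rangle = y_{\gamma+e_i,\alpha+e_j}$, symmetric in $i$ and $j$, is complete and is essentially the argument behind \cite[Corollary 3.9]{josz-molzahn-2015}, which the paper invokes for that half.

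The one point of divergence is the commutation of the unitary shifts in the Toeplitz case. The paper disposes of it with ``observe that $(T_1,\dots,T_n)$ is a pair-wise commuting tuple'' and offers no argument; you correctly note that $T_iT_jx_\alpha=x_{\alpha+e_i+e_j}=T_jT_ix_\alpha$ only covers $|\alpha|\leqslant d-2$, which exhausts $\mathcal{H}$ when $d_K\geqslant 2$ but not necessarily when $d_K=1$, and you propose a one-step flat extension of the Toeplitz data to handle $|\alpha|=d-1$. That extension is asserted rather than proved, so your write-up has an acknowledged gap at precisely the point where the paper has an unargued assertion; neither text contains a complete proof of commutation for flat Toeplitz data with $d_K=1$. (A partial repair in the spirit of your computation: for $\gamma$ with $\gamma_i\geqslant 1$ and $\gamma_j\geqslant 1$ one gets $\langle x_\gamma,T_iT_jx_\alpha\rangle=\langle x_{\gamma-e_i},x_{\alpha+e_j}\rangle=y_{\gamma-e_i,\alpha+e_j}$, which by the Toeplitz property depends only on $\gamma-\alpha-e_i-e_j$ and is therefore symmetric in $i,j$; but such $x_\gamma$ need not span $\mathcal{H}$, so this does not close the case either.) Everything else in your proposal --- the ``only if'' direction, the localization of the atoms in $K$ via separating polynomials of degree at most $d-d_K$, the count of atoms on $\{g_i=0\}$, the observation that the Toeplitz atoms lie on the torus, and uniqueness via Lemmas \ref{lemma:ind} and \ref{lemma:range} --- matches the paper.
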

\begin{proof}
($\Longrightarrow$) Same as proof as in \cite[Theorem 3.8]{josz-molzahn-2015}. ($\Longleftarrow$) The proof is similar to that of Theorem \ref{th:trunc}. We focus on the areas where it differs and only consider the case of Toeplitz matrices. 
The Toeplitz property implies that the shift operators in the proof of Theorem~\ref{th:trunc} are well-defined and unitary. Indeed, for all complex numbers $(u_\alpha)_{|\alpha|\leqslant d-1}$, it holds that
\begin{equation}
\begin{array}{rcl}
\left\| \sum\limits_{|\alpha| \leqslant d-1} u_\alpha x_{\alpha+e_k} \right\|^2 & ~=~ & \sum\limits_{|\alpha|,|\beta|\leqslant d-1} \overline{u}_\alpha u_\beta ~ x_{\alpha+e_k}^* x_{\beta+e_k} \\[2em]
& ~=~ & \sum\limits_{|\alpha|,|\beta|\leqslant d-1} \overline{u}_\alpha u_\beta ~ y_{\alpha+e_k,\beta+e_k} \\[2em]
& ~=~ & \sum\limits_{|\alpha|,|\beta|\leqslant d-1} \overline{u}_\alpha u_\beta ~ y_{\alpha,\beta} \\[2em]
& ~=~ &  \left\| \sum\limits_{|\alpha| \leqslant d-1} u_\alpha x_{\alpha} \right\|^2.
\end{array}
\end{equation}
Using the same argument as in the proof of Theorem \ref{th:trunc}, the shift $T_k$ is well-defined. In addition, it is isometric and thus satisfies $T_k^* T_k = T_k T_k^* = I$.
Now, observe that $(T_1,\hdots,T_n)$ is a pair-wise commuting tuple of operators on the Hilbert space $\mathcal{H}$. As a consequence, $(T_1, \hdots , T_n,T_1^*, \hdots, T_n^*) = ( T_1, \hdots , T_n,T_1^{-1}, \hdots, T_n^{-1})$ is also a pair-wise commuting tuple of operators. Indeed, if two invertible square matrices $A$ and $B$ commute, so do $A^{-1}$ and $B^{-1}$ (since $A^{-1}B^{-1} AB B^{-1}A^{-1} =  A^{-1}B^{-1} BA B^{-1}A^{-1}$), and so do $A$ and $B^{-1}$ (since $B^{-1} AB B^{-1} =  B^{-1} BA B^{-1}$). It follows that $T_1,\hdots,T_n$ are jointly hyponormal. The rest of the proof is identical to that of Theorem \ref{th:trunc}.
\end{proof}

In the univariate case $n=1$ with support equal to the full space $K = \mathbb{C}$, the truncated moment problem in Theorem \ref{th:toeplitz} with Toeplitz data is actually the truncated trigonometric moment problem. A solution to this problem has been given by \cite[P. 211]{iohvidov1982}, \cite[Theorem I.I.12]{akhiezer1962}, and \cite[Theorem 6.12]{curto1991}. It can be stated as follows. A Toeplitz matrix with positive upper left element is positive semidefinite if and only if it is represented by a positive Borel measure. In other words, the rank need not be preserved (Condition 2 of Theorem \ref{th:toeplitz}) for there to exist a measure. The trigonometric moment problem has been considered more recently in \cite{gabardo1999,li2006,zag2015}. \\

In the setting of interpolation, the shift operators are more simple to study since we assume the existence of a representing measure (i.e. $\nu = \sum_{k=1}^d w_k \delta_{\exp(f_k)}$). The measure is uniquely determined by its moments $y_\alpha$ up to degree $2d$, and thus the interpolation problem has a unique solution with $d$ exponentials. In addition, the rank is preserved, i.e. $\text{rank} \mathcal{H}_{d}(y) = \text{rank} \mathcal{H}_{d-1}(y)$. These claims can be easily be deduced from Lemma \ref{lemma:ind} and Lemma \ref{lemma:range} in the Appendix. We next prove that shift operators associated to the Hankel moment matrix are guaranteed to exist, that they are simultaneously diagonalizable, and that they are complex symmetric.

The moment matrix $\mathcal{H}_d(y)$ is of Hankel type and is thus complex symmetric. According to the Autonne-Takagi factorization \cite{autonne} \cite[Theorem II]{takagi} which applies to any square complex symmetric matrix, there exists a unitary matrix $U$ (i.e. $U^*U = UU^* = I$) and a diagonal matrix $D$ with real nonnegative entries such that $\mathcal{H}_d(y) = UDU^T$. Note that the diagonal values of $D$ are the eigenvalues of $\mathcal{H}_d(y)\mathcal{H}_d(y)^*$, whose rank is equal to that of $\mathcal{H}_d(y)$. Defining $X := \sqrt{D}~U^T$, we may in fact write that $\mathcal{H}_d(y) = X^T X$ where the rows of $X =: (x_{i,\alpha})$ are indexed by $1 \leqslant i \leqslant d$ and the columns are $X$ are indexed by $|\alpha| \leqslant d$. In addition, since our data is represented by the measure $\nu$, we know that $\mathcal{H}_d(y) = V^TV$ where the rows of $V =: (s_k \exp(f_k)^\alpha)$ are indexed by $1 \leqslant k \leqslant d$ and the columns are $V$ are indexed by $|\alpha| \leqslant d$. Here, $s_k$ is a complex number such that $s_k^2 = w_k$ where $w_k$ is a complex weight of the measure $\nu$. As a result, $X^T X = V^T V$ and thus $X$ and $V$ have the same ranges. Hence, there exists an invertible matrix $P$ such that $X = PV$. Thus $V^T P^T P V = V^T V$, or, in other words, $V^T (P^TP - I)V = 0$. According to Lemma \ref{lemma:ind} and Lemma \ref{lemma:range} in the Appendix, the range of $V$ is equal to $\mathbb{C}^d$, so that we in fact have $v^T (P^TP - I)v = 0$ for all $v \in \mathbb{C}^d$. As a result, $P^T P = P P^T = I$. Going back to the relationship $X = PV$, we have in particular that $x_\alpha = P v_\alpha$ for all $|\alpha| \leqslant d$. Defining $D_k := \text{diag}( \exp(f_{1k}) , \hdots , \exp(f_{dk}))$, we have that $D_k v_\alpha = v_{\alpha+e_k}$, and thus $PD_kP^Tx_\alpha = x_{\alpha+e_k}$. The shift operators are thus $T_k := PD_kP^T$ for $1\leqslant k \leqslant n$. Remembering that $\mathcal{H}_d(y) = X^T X$, we have
\begin{equation}
\begin{array}{rcl}
y_{\alpha,\beta} & = & x_\alpha^T x_\beta \\[.5em]
& = & (T^\alpha x_0)^T (T^\beta x_0 )\\[.5em]
& = & x_0^T (T^\alpha)^T T^\beta x_0 \\[.5em]
& = & x_0^T (PD^\alpha P^T)^T PD^\beta P^T x_0 \\[.5em]
& = & x_0^T P D^\alpha P^TP D^\beta P^T x_0 \\[.5em]
& = & x_0^T P D^\alpha  D^\beta P^T x_0 \\[.5em]
& = & x_0^T P D^{\alpha+\beta} P^T x_0 \\[.25em]
& = &  x_0^T\left( \sum\limits_{k=1}^d p_k d_k^{\alpha+\beta} p_k^T\right) x_0 \\[.5em]
& = &  \sum\limits_{k=1}^d (x_0^T p_k p_k^T x_0) ~ d_k^{\alpha+\beta}  \\[.5em]
& = &  \sum\limits_{k=1}^d (x_0^T p_k)^2 ~ d_k^{\alpha+\beta}
\end{array}
\end{equation}
so that the sought measure is entirely determined
\begin{equation}
\nu = \sum_{k=1}^d (x_0^T p_k)^2 ~ \delta_{d_k}.
\end{equation}
We conclude by proving that the shifts are complex symmetric. Since $\text{rank} \mathcal{H}_{d}(y) = \text{rank} \mathcal{H}_{d-1}(y)$, we have that $\text{span} (x_\alpha)_{|\alpha|\leqslant d} = \text{span} (x_\alpha)_{|\alpha|\leqslant d-1}$. Consider some complex numbers $(u_\alpha)_{|\alpha|\leqslant d-1}$ and $(v_\alpha)_{|\alpha|\leqslant d-1}$. Let $u := \sum_{|\alpha|\leqslant d-1} u_\alpha x_\alpha$ and $v:= \sum_{|\alpha|\leqslant d-1} v_\alpha x_\alpha$ and compute
\begin{equation}
\begin{array}{rcl}
u^T T_k v & = & (\sum_{|\alpha|\leqslant d-1} u_\alpha x_\alpha)^T T_k (\sum_{|\alpha|\leqslant d-1} v_\alpha x_\alpha) \\[.5em]
& = & (\sum_{|\alpha|\leqslant d-1} u_\alpha x_\alpha)^T (\sum_{|\alpha|\leqslant d-1} v_\alpha T_kx_\alpha) \\[.5em]
& = & (\sum_{|\alpha|\leqslant d-1} u_\alpha x_\alpha)^T (\sum_{|\alpha|\leqslant d-1} v_\alpha x_{\alpha+e_k}) \\[.5em]
& = & \sum_{|\alpha|,|\beta|\leqslant d-1} u_\alpha v_\beta ~ x_\alpha^T x_{\beta+e_k} \\[.5em]
& = & \sum_{|\alpha|,|\beta|\leqslant d-1} u_\alpha v_\beta ~ y_{\alpha+\beta+e_k} \\[.5em]
& = & \sum_{|\alpha|,|\beta|\leqslant d-1} u_\alpha v_\beta ~ x_{\alpha+e_k}^T x_{\beta} \\[.5em]
 & = & (T_k u)^T v = u^T T_k^T v 
\end{array}
\end{equation}
whence $T_k = T_k^T$.

\section{Algorithm}
\label{sec:Algorithm}
\text{}
\\
Input: 
\begin{itemize}
\item number of variables $n$
\item truncation order $d$
\item moment matrix of rank $r$
\end{itemize}
Output:
\begin{itemize}
\item $r$-atomic measure\\
\end{itemize}
Below, the notation $\bullet$ either stands either for conjugate transpose or transpose depending on whether the algorithm is being applied to polynomial optimization ($\bullet = *$) or interpolation ($\bullet = T$). In the case of real polynomial optimization, $\bullet = * = T$.\\
\begin{enumerate}
\item Factorize the moment matrix into $X^{\bullet}X$. The rows of $X = (x_{i,\alpha})$ are indexed by $1\leqslant i \leqslant r$ and the columns of $X$ are indexed by $|\alpha|\leqslant d$.\\
\item Find a subset of the columns of $X$ that generate the column space of $X$. Let $\alpha(1),\hdots,\alpha(r) \in \mathbb{N}^n$ denote their indexes.\\
\item Compute the shift operators $T_1, \hdots , T_n \in \mathbb{C}^{r \times r}$ by applying them only to the column basis, i.e. $T_k x_{\alpha(i)} = x_{\alpha(i)+e_k}$ for $1\leqslant i \leqslant r$.\\
\item Generate some random $t_1, \hdots , t_n \in \mathbb{R}$ and diagonalize the matrix $\sum_{i=1}^n t_i T_i = PDP^\bullet$ with $P^\bullet P = P P^\bullet = I$. Let $P =: (p_1 \hdots p_r)$ denote the columns of $P$.\\
\item Compute the measure $\mu  =  \sum\limits_{k=1}^r x_0^{\bullet} p_k p_k^{\bullet} x_0 ~ \delta_{(p_k^\bullet T_i p_k)_{1\leqslant i \leqslant n}}$.
\end{enumerate}
In the case of polynomial optimization, the atoms of the measure are global solutions. In the case of interpolation, the arguments of the atoms are the frequencies (modulo $2\pi$ for their imaginary parts) and the weights of the measure are the weights of the complex exponential sum.

\section{Numerical experiments}
\label{sec:Numerical experiments}
We use Matlab 2015b, CVX \cite{cvx,gb08}, and SeDuMi \cite{sturm-1999}. The Cholesky factorization of positive semidefinite matrices is computed via an eigendecomposition followed by a QR factorization. The Autonne-Takagi factorization is computed using the implementation of Guo, Luk, Xu, and Piao \cite{xu2006,xu2005,guo2003,luk2003}. Another algorithm for this factorization is discussed in \cite{gerstner1988}. Table \ref{tab:123} below summarizes the experiments. Each of them illustrates a different property of the shift operators that arises in applications.
\begin{table}[h]
\begin{tabular}{|c|c|c|}
\hline
\textbf{Truncated data} & \textbf{Shift operators} & \textbf{Experiment}  \\
\hline
General case (only Hermitian) & Existence not guaranteed & Example \ref{eg:exist} \\
\hline
Hermitian but not hyponormal & $T_1,\hdots,T_n,T_1^*,\hdots,T_n^*$ do not commute & Example \ref{eg:commute} \\
\hline
Hyponormal & $T_1,\hdots,T_n,T_1^*,\hdots,T_n^*$ commute & Example \ref{eg:ellipse} \\
\hline
Enforced hyponormality & $T_1,\hdots,T_n,T_1^*,\hdots,T_n^*$ are made to commute & Example \ref{eg:ellipsebis} \\
\hline
Hermitian Toeplitz & Unitary $T_k^* = T_k^{-1}, ~ k = 1, \hdots, n$ & Example \ref{eg:toeplitz} \\
\hline 
Hermitian Hankel & Real symmetric $T_k^T = T_k, ~ k = 1, \hdots, n$ & Example \ref{eg:real} \\
\hline 
Complex Hankel & Complex symmetric $T_k^T = T_k, ~ k = 1, \hdots, n$ & Example \ref{eg:prony} \\
\hline 
\end{tabular}
\caption{Properties of shift operators}
\label{tab:123}
\end{table}

\begin{example}[Nonexistent shifts] 
\label{eg:exist}
Consider the following truncated data:
\begin{equation}
M_2(y) ~=~ 
\begin{array}{cccc}
 & ~1~ & ~z~ & ~z^2~ \\[.5em]
1\hphantom{^2}  & 1 & 1 & 2\hphantom{^2} \\[.5em]
\bar{z}\hphantom{^2} &  1 & 1 & 2\hphantom{^2} \\[.5em]
\bar{z}^2 & 2 & 2 & 4\hphantom{^2}
\end{array}
\end{equation}
The existence of shift operators is guaranteed if Conditions 1 and 2 of Theorem \ref{th:trunc} hold. Condition 2 is satified because $\text{rank}M_1(y) = \text{rank}M_2(y) = 1$. However, Condition 1 is not satisfied. The spectrum of $M_2(y)$ is equal to $\{ 0 , 6 \}$ but there does not exist $R\geqslant 0$ such that $M_1[(R^2-|z|^2)y] \succcurlyeq 0$. Indeed, the characteristic polynomial of $M_1[(R^2-|z|^2)y]$ with indeterminate $X$ is equal to
\begin{equation}
\left|
\begin{array}{cc}
X + 1 - R^2 & 2 - R^2 \\
2 - R^2 & X + 4 - R^2 
\end{array}
\right|
= \hdots 
\end{equation}
$$
\left( X - \frac{2R^2 - 5 + \sqrt{ 4R^2 - 20R^2 + 29}}{2} \right) \left( X - \underbrace{\frac{2R^2 - 5 - \sqrt{ 4R^2 - 20R^2 + 29}}{2}}_{< 0} \right).
$$
The moment matrix can be factorized as $M_2(y) = X^* X$ where
\begin{equation}
X ~=~ \begin{pmatrix} x_0 & x_1 & x_2 \end{pmatrix} =
\begin{array}{ccc}
~\hphantom{^2}1 & ~z~ & z^2~ \\
( \hphantom{^2}1 & 1 & 2\hphantom{^2} )
\end{array}
\end{equation}
There does not exist a shift operator $T$ acting on $\mathbb{C}$, i.e. a scalar $T\in \mathbb{C}$, such that $T x_0 = x_1$ and $Tx_1 = x_2$. That would imply that $T \times 1 = 1$ and $T \times 1 = 2 $, which is absurd.
\end{example}

\begin{example}[Non-hyponormal shifts] 
\label{eg:commute}
Consider the following truncated data which was randomly generated:
$$
M_2(y) = \hdots
$$
$$
\scriptsize  
\begin{array}{ccccccc}
 & 1 & z_1 & z_2 & z_1^2 & z_1 z_2 & z_2^2 \\[.5em]
1 & \hphantom{-}5.6315 - 0.0000i  & \hphantom{-}1.5971 - 1.5041i &  -3.2028 + 0.0950i & -6.8376 - 2.1324i & \hphantom{-}7.6557 - 6.1320i & -1.2416 - 0.9424i  \\[.5em]
\bar{z}_1 & \hphantom{-}1.5971 + 1.5041i &  \hphantom{-}2.6137 - 0.0000i &  -2.0895 - 0.2555i &  -2.0629 - 2.5862i & \hphantom{-}5.3658 - 1.4552i & -1.7395 - 0.0898i \\[.5em]
\bar{z}_2 & -3.2028 - 0.0950i & -2.0895 + 0.2555i & \hphantom{-}4.2547 - 0.0000i &  \hphantom{-}4.9178 + 1.7360i & -7.3334 + 5.0916i  & \hphantom{-}2.2232 + 2.0478i \\[.5em]
\bar{z}_1^2 & -6.8376 + 2.1324i & -2.0629 + 2.5862i & \hphantom{-}4.9178 - 1.7360i  & \hphantom{-}9.6940 - 0.0000i & -7.9417 +11.7260i  & \hphantom{-}2.6639 + 0.8871i \\[.5em]
\bar{z}_1\bar{z}_2 & \hphantom{-}7.6557 + 6.1320i &  \hphantom{-}5.3658 + 1.4552i & -7.3334 - 5.0916i & -7.9417 -11.7260i & \hphantom{-}22.1171 - 0.0000i & -1.9300 - 5.1471i \\[.5em]
\bar{z}_2^2 & -1.2416 + 0.9424i & -1.7395 + 0.0898i & \hphantom{-}2.2232 - 2.0478i  & \hphantom{-}2.6639 - 0.8871i & -1.9300 + 5.1471i &  \hphantom{-}3.2855 - 0.0000i
\end{array}
$$
We chose the data so that $M_2(y) \succcurlyeq 0$ and $M_1(y)$ is invertible. As result, $M_1[(R^2-|z_1|^2-|z_2|^2)y] \succcurlyeq 0$ holds as long a $R>0$ is big enough. Hence Condition 1 of Theorem \ref{th:trunc} holds. In addition, we chose the data so that the rank is preserved, i.e. $\text{rank} M_1(y) = \text{rank} M_2(y) = 3$ and so Condition 2 holds. All the conditions of Theorem \ref{th:trunc} are thus satisfied, apart from Condition 3. We now show that Condition 3 fails to hold, and thus no atomic measure may be extracted from the data.

We have the Cholesky decomposition $M_2(y) = X^*X$ where 
$$X = \hdots $$
$$
\small
\begin{array}{cccccc}
1 & z_1 & z_2 & z_1^2 & z_1 z_2 & z_2^2 \\[.5em]
\hphantom{-}2.3731 - 0.0000i\hphantom{-} & \hphantom{-}0.6730 + 0.6338i\hphantom{-} & -1.3496 - 0.0400i\hphantom{-} & -2.8813 + 0.8986i\hphantom{-} & \hphantom{-}3.2261 + 2.5840i\hphantom{-} & -0.5232 + 0.3971i\hphantom{-} \\[.5em]
\hphantom{-}0.0000 - 0.0000i\hphantom{-} & \hphantom{-}1.3263 - 0.0000i\hphantom{-} & -0.8715 - 0.4321i\hphantom{-} & -0.5227 + 0.1170i\hphantom{-} & \hphantom{-}1.1738 + 1.3277i\hphantom{-} & -1.2358 - 0.3839i\hphantom{-} \\[.5em]
\hphantom{-}0.0000 - 0.0000i\hphantom{-} & \hphantom{-}0.0000 - 0.0000i\hphantom{-}  & \hphantom{-}1.2188 - 0.0000i\hphantom{-} & \hphantom{-}0.5416 - 0.0658i\hphantom{-} & -1.0497 - 0.8889i\hphantom{-} & \hphantom{-}0.2380 - 1.0596i\hphantom{-}
\end{array}
$$
for which the column basis indexed by $\{1,z_1,z_2\}$ is readily identified. We then obtain the shift operators
\begin{equation}
T_1 =
\left(
\begin{array}{ccc}
   0.2836 + 0.2671i & -2.1888 + 0.4064i  & \hphantom{-}1.2431 + 1.9399i \\[.5em]
   0.5589 - 0.0000i & -0.6777 - 0.1789i &  \hphantom{-}1.1608 + 0.7396i\\[.5em]
   0.0000 - 0.0000i  & \hphantom{-}0.4084 - 0.0496i & -0.5517 - 0.6200i\\[.5em]
\end{array}
\right)
\end{equation}
and
\begin{equation}
T_2 =
\left(
\begin{array}{ccc}
  -0.5687 - 0.0169i &  \hphantom{-}2.7130 + 2.2286i &  \hphantom{-}0.0913 + 2.8438i\\[.5em]
  -0.3672 - 0.1821i & \hphantom{-}0.9844 + 1.2690i & -1.1607 + 0.7277i\\[.5em]
   \hphantom{-}0.5136 - 0.0000i & -1.0521 - 0.9157i  & \hphantom{-}0.3364 - 1.8803i\\[.5em]
\end{array}
\right).
\end{equation}
For example, the image by $T_1$ of the column of $X$ indexed $z_2$ is equal to the column of $X$ indexed by $z_1z_2$:
\begin{equation}
T_1 \times
\begin{pmatrix}
-1.3496 - 0.0400i\\
-0.8715 - 0.4321i\\
\hphantom{-}1.2188 - 0.0000i
\end{pmatrix}
=
\begin{pmatrix}
   \hphantom{-}3.2261 + 2.5840i \\
   \hphantom{-}1.1738 + 1.3277i \\
  -1.0497 - 0.8889i 
\end{pmatrix}.
\end{equation}
Using an eigendecomposition, we find that
\begin{equation}
\text{sp}\left\{
\begin{pmatrix}
I        & T_1^* & T_2^* \\
T_1    & T_1^* T_1  & T_2^* T_1  \\
T_2    & T_1^* T_2  & T_2^* T_2
\end{pmatrix} 
\right\}
= 
\begin{pmatrix}
-18.4798\\
-4.4504\\ 
-2.9400\\
\hphantom{-}0.9867\\
\hphantom{-}3.9620\\ 
\hphantom{-} 5.4116\\ 
\hphantom{-}13.3779\\ 
\hphantom{-}20.0161\\
\hphantom{-}30.3167
\end{pmatrix}
\end{equation}
and
\begin{equation}
\text{sp}\left\{
\begin{pmatrix}
M_{1}(y) & M_{1}(\bar{z}_1 y) & M_{1}(\bar{z}_2 y) \\ 
M_{1}(z_1 y) & M_{1}(|z_1|^2 y)  & M_{1}( \bar{z}_2 z_1 y) \\
M_{1}(z_2 y) & M_{1}( \bar{z}_1 z_2 y) & M_{1}(|z_2|^2 y)
\end{pmatrix} 
\right\}
= 
\begin{pmatrix}
  -27.0712\\
  -15.5635\\
   -9.5314\\
    \hphantom{-}7.1774\\
    \hphantom{-}9.9912\\
   \hphantom{-}18.8951\\
   \hphantom{-}19.0130\\
   \hphantom{-}27.9900\\
   \hphantom{-}45.6814
\end{pmatrix}
\end{equation}
where $\text{sp}\{\cdot \}$ stands for spectrum.
This shows that Condition 3 of Theorem \ref{th:trunc} does not hold, that is to say, $T_1$ and $T_2$ are not jointly hyponormal.
\end{example}

\begin{example}[Complex polynomial optimization over an ellipse]
\label{eg:ellipse}
\\\\
Minimize
$$ 3 - |z_1|^2 + \frac{1}{2} i\bar{z}_1 z_2^2 - \frac{1}{2} i\bar{z}_2^2z_1 + |z_2|^2 $$
over $z_1,z_2 \in \mathbb{C}$ subject to
$$  |z_1|^2 - \frac{1}{4} \bar{z}_1^2 - \frac{1}{4} z_1^2 - 1 = 0, ~~~  3 - |z_1|^2 - |z_2|^2 = 0, $$
$$ \overline{z}_2 - z_2 = 0, ~~~  \overline{z}_2 + z_2 \geqslant 0 .$$
The feasible set of this example is taken from \cite[Example 3.2]{josz-molzahn-2015}, itself originally from \cite{putinar-scheiderer-2012}. It is represented in Figure \ref{fig:ellipse}.
\begin{figure}[!h]
	\centering
	\includegraphics[width=.6\textwidth]{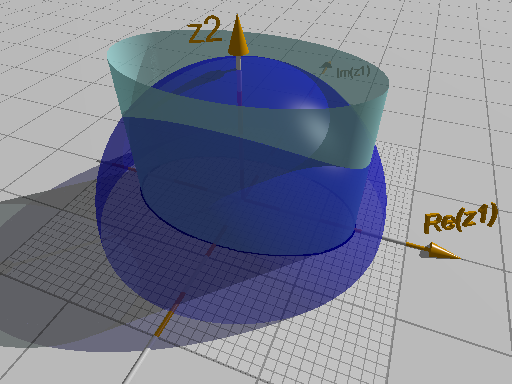}
	\caption{Feasible set in Example \ref{eg:ellipse}: intersection of semi-sphere and elliptic cylinder. We generated this figure using POV-Ray 3.7.0 \cite{povray2013}.}
	 \label{fig:ellipse}
\end{figure}

The first order complex relaxation is not defined because the variables appear to the second power (notice that $d_K = 2$). The second order relaxation yields the value 1.00047 and a spectral decomposition indicates that $\text{rank} M_0(y) = 1$ and $\text{rank} M_2(y) = 3$. Thus Condition 2 of Theorem \ref{th:trunc} does not hold. The third order relaxation yields 1.93291 and the following moment matrix:
$$
M_3(y) =  \hdots
$$
$$
\begin{array}{ccccccccccc}
 & 1 & z_1 & z_2 & z_1^2 & z_1 z_2 & z_2^2 & z_1^3 & z_1^2 z_2 & z_1 z_2^2 & z_2^3 \\[.25em]
1 & \hphantom{-}1.0000\hphantom{i} &  -0.1396i  & \hphantom{-}1.0193\hphantom{i} & \hphantom{-}1.9220\hphantom{i} &  -0.1423i  & \hphantom{-}1.0390\hphantom{i} & -0.8105i  & \hphantom{-}1.9591\hphantom{i} &  -0.1451i &  \hphantom{-}1.0590\hphantom{i}  \\[.25em]
\bar{z}_1 &   \hphantom{-}0.1396i &  \hphantom{-}1.9610\hphantom{i} & \hphantom{-}0.1423i  &  -0.2738i &  \hphantom{-}1.9989\hphantom{i} & \hphantom{-}0.1451i  & \hphantom{-}3.7691\hphantom{i} &  -0.2791i &  \hphantom{-}2.0375\hphantom{i} & \hphantom{-}0.1479i \\[.25em]
\bar{z}_2 &    \hphantom{-}1.0193\hphantom{i} &  -0.1423i &  \hphantom{-}1.0390\hphantom{i} & \hphantom{-}1.9591\hphantom{i} &  -0.1451i & \hphantom{-}1.0590\hphantom{i} & -0.8262i &  \hphantom{-}1.9970\hphantom{i} & -0.1479i  & \hphantom{-}1.0795\hphantom{i} \\[.25em]
\bar{z}_1^2 &    \hphantom{-}1.9220\hphantom{i} & \hphantom{-}0.2738i  & \hphantom{-}1.9591\hphantom{i} & \hphantom{-}3.8456\hphantom{i} & \hphantom{-}0.2791i & \hphantom{-}1.9970\hphantom{i} & -0.5369i &  \hphantom{-}3.9198\hphantom{i} & \hphantom{-}0.2845i  & \hphantom{-}2.0355\hphantom{i}  \\[.25em]
\bar{z}_1 \bar{z}_2 & \hphantom{-}0.1423i  & \hphantom{-}1.9989\hphantom{i} & \hphantom{-}0.1451i &  -0.2791i  & \hphantom{-}2.0375\hphantom{i} &  \hphantom{-}0.1479i &  \hphantom{-}3.8419\hphantom{i} &  -0.2845i  & \hphantom{-}2.0768\hphantom{i} & \hphantom{-}0.1507i \\[.25em]
\bar{z}_2^2 & \hphantom{-}1.0390\hphantom{i} & - 0.1451i  & \hphantom{-}1.0590\hphantom{i} & \hphantom{-}1.9970\hphantom{i} &  -0.1479i & \hphantom{-}1.0795\hphantom{i} &  -0.8421i  & \hphantom{-}2.0355\hphantom{i} &  -0.1507i &  \hphantom{-}1.1003\hphantom{i}  \\[.25em]
\bar{z}_1^3 & \hphantom{-}0.8105i &  \hphantom{-}3.7691\hphantom{i} & \hphantom{-}0.8262i  & \hphantom{-}0.5369i  & \hphantom{-}3.8419\hphantom{i} & \hphantom{-}0.8421i &  \hphantom{-}7.5412\hphantom{i} & \hphantom{-}0.5473i  & \hphantom{-}3.9161\hphantom{i} & -0.8584i \\[.25em]
\bar{z}_1^2 \bar{z}_2 &    \hphantom{-}1.9591\hphantom{i} & \hphantom{-}0.2791i  & \hphantom{-}1.9970\hphantom{i} &  \hphantom{-}3.9198\hphantom{i} & \hphantom{-}0.2845i & \hphantom{-}2.0355\hphantom{i} & -0.5473i &  \hphantom{-}3.9955\hphantom{i} & \hphantom{-}0.2900i  & \hphantom{-}2.0748\hphantom{i}  \\[.25em]
\bar{z}_1 \bar{z}_2^2 & \hphantom{-}0.1451i  & \hphantom{-}2.0375\hphantom{i} & \hphantom{-}0.1479i   &  -0.2845i  & \hphantom{-}2.0768\hphantom{i} & \hphantom{-}0.1507i  & \hphantom{-}3.9161\hphantom{i} & -0.2900i &  \hphantom{-}2.1169\hphantom{i} & \hphantom{-}0.1536i \\[.25em]
\bar{z}_2^3 &    \hphantom{-}1.0590\hphantom{i} & -0.1479i  & \hphantom{-}1.0795\hphantom{i}  & \hphantom{-}2.0355\hphantom{i} & -0.1507i & \hphantom{-}1.1003\hphantom{i} & -0.8584i  & \hphantom{-}2.0748\hphantom{i} & -0.1536i  & \hphantom{-}1.1216\hphantom{i}
\end{array}
$$
Using a spectral decomposition, we find that $\text{rank} M_1(y) = \text{rank} M_3(y) = 2$. In addition, we find the Cholesky decomposition $M_3(y) = X^* X$ where
$$
X = \hdots
$$
$$ 
\begin{array}{cccccccccc}
1 & z_1 & z_2 & z_1^2 & z_1 z_2 & z_2^2 & z_1^3 & z_1^2 z_2 & z_1 z_2^2 & z_2^3 \\[.25em]
\hphantom{-}1.0000\hphantom{i} & -0.1396i & \hphantom{-}1.0193\hphantom{i} & \hphantom{-}1.9220\hphantom{i} & -0.1423i & \hphantom{-}1.0390\hphantom{i} & -0.8105i & \hphantom{-}1.9591\hphantom{i} & -0.1451i & \hphantom{-}1.0590\hphantom{i} \\[.25em]
\hphantom{-}0.0000\hphantom{i} & \hphantom{-}1.3934\hphantom{i} & \hphantom{-}0.0000\hphantom{i} & -0.3891i & \hphantom{-}1.4203\hphantom{i} & \hphantom{-}0.0000\hphantom{i} & \hphantom{-}2.6238\hphantom{i} & -0.3966i & \hphantom{-}1.4477\hphantom{i} & \hphantom{-}0.0000\hphantom{i}
\end{array}
$$
for which the column basis indexed by $\{1,z_1\}$ is readily identified. We then obtain the shift operators
\begin{equation}
T_1 = 
\begin{pmatrix}
 -0.1396i &  \hphantom{-}1.3934\hphantom{i} \\
  \hphantom{-}1.3934\hphantom{i} & -0.1396i
\end{pmatrix}
~~~\text{and}~~~
T_2 =
\begin{pmatrix}
\hphantom{-}1.0193\hphantom{i} & \hphantom{-}0.0000\hphantom{i} \\
\hphantom{-}0.0000\hphantom{i} & \hphantom{-}1.0193\hphantom{i}
\end{pmatrix}.
\end{equation}
For example, the image by $T_1$ of the column of $X$ indexed by $z_1z_2$ is equal to the column of $X$ indexed by $z_1^2 z_2$:
\begin{equation}
T_1 \times
\begin{pmatrix}
-0.1423i \\
\hphantom{-}1.4203\hphantom{i}
\end{pmatrix}
=
\begin{pmatrix}
\hphantom{-}1.9591\hphantom{i} \\
- 0.3966i
\end{pmatrix}.
\end{equation}
Using an eigendecomposition, we find that
\begin{equation}
\text{sp}\left\{
\begin{pmatrix}
I        & T_1^* & T_2^* \\
T_1    & T_1^* T_1  & T_2^* T_1  \\
T_2    & T_1^* T_2  & T_2^* T_2
\end{pmatrix} 
\right\}
= 
\begin{pmatrix}
   -0.0000 \\
   -0.0000 \\
   \hphantom{-}0.0000\\
    \hphantom{-}0.0000\\
    \hphantom{-}4.0000\\
    \hphantom{-}4.0000
\end{pmatrix}
\end{equation}
and
\begin{equation}
\text{sp}\left\{
\begin{pmatrix}
M_{1}(y) & M_{1}(\bar{z}_1 y) & M_{1}(\bar{z}_2 y) \\ 
M_{1}(z_1 y) & M_{1}(|z_1|^2 y)  & M_{1}( \bar{z}_2 z_1 y) \\
M_{1}(z_2 y) & M_{1}( \bar{z}_1 z_2 y) & M_{1}(|z_2|^2 y)
\end{pmatrix} 
\right\}
= 
\begin{pmatrix}
   -0.0000\\
    \hphantom{-}0.0000\\
    \hphantom{-}0.0000\\
    \hphantom{-}0.0000\\
    \hphantom{-}0.0000\\
    \hphantom{-}0.0000\\
    \hphantom{-}0.0000\\
    \hphantom{-}7.9175\\
    \hphantom{-}8.0825
\end{pmatrix}.
\end{equation}
This implies that $T_1$ and $T_2$ are jointly hyponormal and all three conditions of Theorem \ref{th:trunc} hold. An atomic measure can be extracted from the truncated data. Due to hyponormality, we may diagonalize the shifts simultaneously. Taking a random linear combination, we have
\begin{equation}
   -0.1140 T_1 + 0.7979  T_2 =
\begin{pmatrix}
   \hphantom{-}0.8133 + 0.0159i  &~ -0.1588 + 0.0000i \\
  -0.1588 + 0.0000i &~ \hphantom{....}0.8133 + 0.0159i
\end{pmatrix}
= PDP^*
\end{equation}
with
\begin{equation}
P = 
\begin{pmatrix}
    \hphantom{-}0.7071\hphantom{i} &  \hphantom{-}0.7071\hphantom{i} \\
   -0.7071\hphantom{i} &  \hphantom{-}0.7071\hphantom{i} 
\end{pmatrix}
\end{equation}
and
\begin{equation}
D = \begin{pmatrix}
   0.9721 + 0.0159i & ~ 0.0000 + 0.0000i \\
   0.0000 + 0.0000i & ~ 0.6545 + 0.0159i
\end{pmatrix}.
\end{equation}
The first coordinate of the atoms is given by the diagonal of 
\begin{equation} P^*T_1P =
\begin{pmatrix}
   -1.3934 - 0.1396i &~ -0.0000 - 0.0000i \\
   \hphantom{-}0.0000 + 0.0000i &~ \hphantom{-}1.3934 - 0.1396i
\end{pmatrix}
\end{equation}
The second coordinate of the atoms is given by the diagonal of 
\begin{equation} P^*T_2P =
\begin{pmatrix}
  1.0193 - 0.0000i &~ 0.0000 + 0.0000i \\
  0.0000 - 0.0000i &~ 1.0193 + 0.0000i
\end{pmatrix}
\end{equation}
The atoms can now be read by looking at the first diagonal elements, then the second diagonal elements:
\begin{equation}
\begin{pmatrix}
  -1.3934 - 0.1396i \\
   \hphantom{-}1.0193 - 0.0000i
\end{pmatrix},
~~~ 
\begin{pmatrix}
   1.3934 - 0.1396i\\
   1.0193 + 0.0000i
\end{pmatrix}.
\end{equation}
With $x_1$ denoting the column of $X$ indexed by $1$, and with $p_1$ and $p_2$ denoting the first and second colums of $P$, the respective weights of the atoms are
\begin{equation}
\begin{array}{l}
|x_1^* p_1|^2 = |1.0000 \times 0.7071|^2 = 0.5000,\\
|x_1^* p_2|^2 = |1.0000 \times 0.7071|^2 = 0.5000.
\end{array}
\end{equation}
\end{example}
\begin{example}[Enforcing joint hyponormality in a convex relaxation]
\label{eg:ellipsebis}
\\\\
Minimize
$$ 3 - |z_1|^2 + \frac{1}{2} i\bar{z}_1 z_2^2 - \frac{1}{2} i\bar{z}_2^2z_1 $$
over $z_1,z_2 \in \mathbb{C}$ subject to
$$  |z_1|^2 - \frac{1}{4} \bar{z}_1^2 - \frac{1}{4} z_1^2 - 1 = 0, ~~~  3 - |z_1|^2 - |z_2|^2 = 0, $$
$$ \overline{z}_2 - z_2 = 0, ~~~  \overline{z}_2 + z_2 \geqslant 0 .$$
The feasible set of this example is the same as in Example \ref{eg:ellipse} and Figure \ref{fig:ellipse}.

The first order complex relaxation is not defined because the variables appear to the second power (notice that $d_K = 2$). The second order relaxation yields the value 0.155089 and the following moment matrix:
$$
M_2(y) = 
\begin{array}{ccccccc}
 & 1 & z_1 & z_2 & z_1^2 & z_1 z_2 & z_2^2 \\[.25em]
1 & \hphantom{-}1.0000\hphantom{i} & - 0.3747i  & \hphantom{-}0.8485\hphantom{i} & \hphantom{-}1.8272\hphantom{i} & - 0.5100i &  \hphantom{-}1.0864\hphantom{i}  \\[.25em]
\bar{z}_1 & \hphantom{-}0.3747i &  \hphantom{-}1.9136\hphantom{i} &  \hphantom{-}0.5100i  & - 0.1929i & \hphantom{-}1.0505\hphantom{i}  &  \hphantom{-}0.9313i \\[.25em]
\bar{z}_2 & \hphantom{-}0.8485\hphantom{i} & - 0.5100i  & \hphantom{-}1.0864\hphantom{i} &  \hphantom{-}0.9245\hphantom{i} & - 0.9313i  & \hphantom{-}1.4950\hphantom{i} \\[.25em]
\bar{z}_1^2 & \hphantom{-}1.8272\hphantom{i} & \hphantom{-}0.1929i &  \hphantom{-}0.9245\hphantom{i} & \hphantom{-}4.5886\hphantom{i} & \hphantom{-}0.1162i &  \hphantom{-}0.9324\hphantom{i} \\[.25em]
\bar{z}_1 \bar{z}_2 & \hphantom{-}0.5100i  & \hphantom{-}1.0505\hphantom{i} & \hphantom{-}0.9313i  & - 0.1162i  & \hphantom{-}1.1523\hphantom{i} & \hphantom{-}1.4140i \\[.25em]
\bar{z}_2^2 & \hphantom{-}1.0864\hphantom{i} & - 0.9313i &  \hphantom{-}1.4950\hphantom{i} & \hphantom{-}0.9324\hphantom{i} & - 1.4140i  & \hphantom{-}2.1069\hphantom{i}
\end{array}
$$
A spectral decomposition reveals that $\text{rank} M_1(y) = \text{rank} M_2(y) = 3$. In addition, we have the Cholesky decomposition $M_2(y) = X^* X$ where
$$
X ~~=~~
\begin{array}{cccccccccc}
1 & z_1 & z_2 & z_1^2 & z_1 z_2 & z_2^2 \\[.25em]
\hphantom{-}1.0000\hphantom{i} & -0.3747i  & \hphantom{-}0.8485\hphantom{i} & \hphantom{-}1.8272\hphantom{i} & -0.5100i & \hphantom{-}1.0864\hphantom{i} \\[.25em]
\hphantom{-}0.0000\hphantom{i} & -1.3316\hphantom{i} & -0.1442i & \hphantom{-}0.6591i & -0.6454\hphantom{i} & -0.3936i \\[.25em]
\hphantom{-}0.0000\hphantom{i} & \hphantom{-}0.0000\hphantom{i} & \hphantom{-}0.5879\hphantom{i} & -0.9030\hphantom{i} & -0.6896i & \hphantom{-}0.8785\hphantom{i}
\end{array}
$$
for which the column basis indexed by $\{1,z_1,z_2\}$ is readily identified. We then obtain the shift operators
\begin{equation}
T_1=
\begin{pmatrix}
 -0.3747i & -1.4777\hphantom{i} & -0.6893i \\[.5em]
  -1.3316\hphantom{i} & -0.1202i & \hphantom{-}0.8537\hphantom{i} \\[.5em]
   \hphantom{-}0.0000\hphantom{i} &  \hphantom{-}0.6781\hphantom{i} & -1.0067i \\[.5em]
\end{pmatrix}
\end{equation}
\begin{equation}
T_2 = 
\begin{pmatrix}
   \hphantom{-}0.8485\hphantom{i} &  \hphantom{-}0.1442i &  \hphantom{-}0.5879\hphantom{i} \\[.5em]
   \hphantom{-}0.1442i &  \hphantom{-}0.4441\hphantom{i} & -0.3525i \\[.5em]
   \hphantom{-}0.5879\hphantom{i} & \hphantom{-}0.3525i &  \hphantom{-}0.5593\hphantom{i} \\[.5em]
\end{pmatrix}
\end{equation}
Using an eigendecomposition, we find that
\begin{equation}
\text{sp}\left\{
\begin{pmatrix}
I        & T_1^* & T_2^* \\
T_1    & T_1^* T_1  & T_2^* T_1  \\
T_2    & T_1^* T_2  & T_2^* T_2
\end{pmatrix} 
\right\}
= 
\begin{pmatrix}
  -1.2759 \\
  -0.2532 \\
  -0.0000 \\
  -0.0000 \\
   \hphantom{-} 0.3735\\
    \hphantom{-}1.3206\\
    \hphantom{-}3.8963\\
    \hphantom{-}3.9388 \\
    \hphantom{-}4.0000\\
\end{pmatrix}
\end{equation}
and
\begin{equation}
\text{sp}\left\{
\begin{pmatrix}
M_{1}(y) & M_{1}(\bar{z}_1 y) & M_{1}(\bar{z}_2 y) \\ 
M_{1}(z_1 y) & M_{1}(|z_1|^2 y)  & M_{1}( \bar{z}_2 z_1 y) \\
M_{1}(z_2 y) & M_{1}( \bar{z}_1 z_2 y) & M_{1}(|z_2|^2 y)
\end{pmatrix}
\right\}
= 
\begin{pmatrix}
   -1.5874\\
   -0.1295\\
   -0.0000\\
    \hphantom{-}0.0000\\
    \hphantom{-}0.1574\\
    \hphantom{-}0.7711\\
    \hphantom{-}3.5471\\
    \hphantom{-}5.0544\\
    \hphantom{-}8.1869
\end{pmatrix} 
\end{equation}
This shows that Condition 3 of Theorem \ref{th:trunc} does not hold and that $T_1$ and $T_2$ are not hyponormal. Thus no measure can be extracted from the data.

The third order complex relaxation yields the value 0.428175 and the moment matrix satisfies $\text{rank}M_1(y) = \text{rank}M_3(y) = 1$. Thus a Dirac measure can be extracted, with support $(z_1,z_2) = (-0.8165i , 1.5275)$ and weight $1.0000$. Instead of computing the third order relaxation, we enforce hyponormality by adding the following constraint in the second order relaxation:
\begin{equation}
\begin{pmatrix}
M_{1}(y) & M_{1}(\bar{z}_1 y) & M_{1}(\bar{z}_2 y) \\ 
M_{1}(z_1 y) & M_{1}(|z_1|^2 y)  & M_{1}( \bar{z}_2 z_1 y) \\
M_{1}(z_2 y) & M_{1}( \bar{z}_1 z_2 y) & M_{1}(|z_2|^2 y)
\end{pmatrix} \succcurlyeq  0.
\end{equation}
We then obtain the value 0.428175 and the following moment matrix:
\begin{equation}
\label{eq:enforce}
M_2(y) = 
\begin{array}{ccccccc}
 & 1 & z_1 & z_2 & z_1^2 & z_1 z_2 & z_2^2 \\[.25em]
1 &    \hphantom{-}1.0000\hphantom{i} & - 0.8165i &  \hphantom{-}1.5275\hphantom{i} & -0.6667\hphantom{i}    & - 1.2472i &  \hphantom{-}2.3333\hphantom{i} \\[.25em]
\bar{z}_1 & \hphantom{-}0.8165i  & \hphantom{-}0.6667\hphantom{i} & \hphantom{-}1.2472i  & - 0.5443i   & \hphantom{-}1.0184\hphantom{i} & \hphantom{-}1.9052i\\[.25em]
\bar{z}_2 & \hphantom{-}1.5275\hphantom{i} & - 1.2472i &  \hphantom{-}2.3333\hphantom{i} & -1.0184\hphantom{i}  & - 1.9052i  & \hphantom{-}3.5642\hphantom{i} \\[.25em]
\bar{z}_1^2 & -0.6667\hphantom{i} & \hphantom{-}0.5443i & -1.0184\hphantom{i} & \hphantom{-}0.4444\hphantom{i}    & \hphantom{-}0.8315i & -1.5556\hphantom{i} \\[.25em]
\bar{z}_1 \bar{z}_2 &    \hphantom{-}1.2472i  & \hphantom{-}1.0184\hphantom{i} & \hphantom{-}1.9052i  & - 0.8315i   & \hphantom{-}1.5556\hphantom{i} &  \hphantom{-}2.9102i \\[.25em]
\bar{z}_2^2 &    \hphantom{-}2.3333\hphantom{i} & - 1.9052i  & \hphantom{-}3.5642\hphantom{i} &  -1.5556\hphantom{i}   & - 2.9102i &  \hphantom{-}5.4444\hphantom{i} 
\end{array}
\end{equation}
which satisfies $\text{rank}M_0(y) = \text{rank}M_2(y) = 1$. In addition, we have the Cholesky decomposition $M_2(y) = X^* X$ where
\begin{equation}
X ~~=~~
\begin{array}{cccccccccc}
1 & z_1 & z_2 & z_1^2 & z_1 z_2 & z_2^2 \\[.25em]
\hphantom{-}1.0000\hphantom{i} & - 0.8165i & \hphantom{-}1.5275\hphantom{i} & -0.6667\hphantom{i} & - 1.2472i & \hphantom{-}2.3333\hphantom{i}
\end{array}
\end{equation}
The shift operators can be read from $X$ and they are equal to
\begin{equation}
T_1 = \begin{pmatrix} - 0.8165i \end{pmatrix}
~~~\text{and}~~~
T_2 = \begin{pmatrix} 1.5275  \end{pmatrix}
\end{equation}
Using an eigendecomposition, we find that
\begin{equation}
\text{sp}\left\{
\begin{pmatrix}
I        & T_1^* & T_2^* \\
T_1    & T_1^* T_1  & T_2^* T_1  \\
T_2    & T_1^* T_2  & T_2^* T_2
\end{pmatrix} 
\right\}
= 
\begin{pmatrix}
   -0.0000\\
    \hphantom{-}0.0000\\
     \hphantom{-}4.0000
\end{pmatrix}
\end{equation}
and
\begin{equation}
\text{sp}\left\{
\begin{pmatrix}
M_{1}(y) & M_{1}(\bar{z}_1 y) & M_{1}(\bar{z}_2 y) \\ 
M_{1}(z_1 y) & M_{1}(|z_1|^2 y)  & M_{1}( \bar{z}_2 z_1 y) \\
M_{1}(z_2 y) & M_{1}( \bar{z}_1 z_2 y) & M_{1}(|z_2|^2 y)
\end{pmatrix}
\right\}
= 
\begin{pmatrix}
   -0.0000\\
    \hphantom{-}0.0000\\
    \hphantom{-}0.0000\\
    \hphantom{-}0.0000\\
    \hphantom{-}0.0000\\
    \hphantom{-}0.0000\\
    \hphantom{-}0.0000\\
    \hphantom{-}0.0000\\
   16.0000
\end{pmatrix} 
\end{equation}
Jointly hyponormality has been successfully enforced. It has reduced the rank of the moment matrix from rank 3 to rank 1. The eigenvalues of $T_1$ and $T_2$ are their values, thus we find the Dirac measure with support $(z_1,z_2) = (-0.8165i , 1.5275)$ and weight $1.0000$, precisely as in the third order relaxation. Notice that in the third order relaxation, there is one conic constraint of size $10 \times 10$ and one of size $6 \times 6$. In the second order relaxation augmented with \eqref{eq:enforce}, there is one conic constraint of size $9 \times 9$, one of size $6 \times 6$, and one of size $3 \times 3$.
\end{example}

\begin{example}[Complex polynomial optimization on the torus] 
\label{eg:toeplitz}
\\\\
Minimize
$$|z - e^{i\frac{\pi}{3}}|^2$$
over $z \in \mathbb{C}$ subject to 
$$ |z|^2 = 1 ~~~\text{and}~~~ z^3 = 1. $$
The feasible set is represented in Figure \ref{fig:unit}.
\begin{figure}[!h]
	\centering
	\includegraphics[width=.5\textwidth]{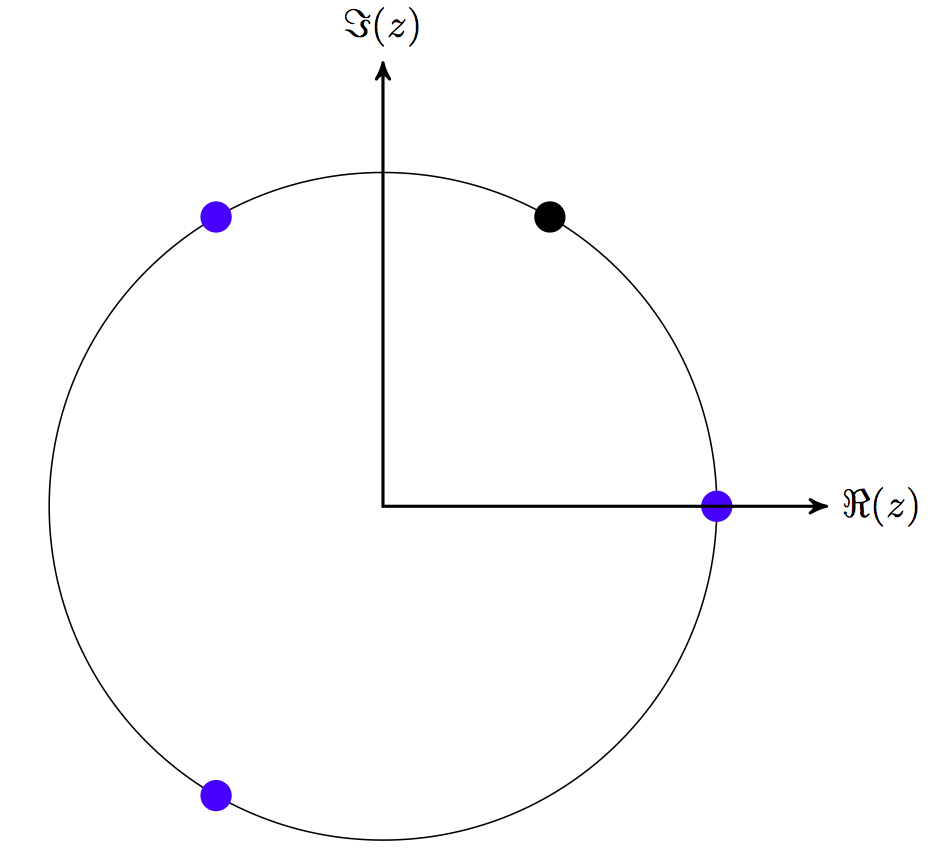}
	\caption{Feasible set in Example \ref{eg:toeplitz}: third roots of unity}
	 \label{fig:unit}
\end{figure}

The first and second order complex relaxations are not defined, while the third relaxation yields the value $0.999999$ and the following moment matrix:
$$
M_3(y) = \hdots
$$
$$
\begin{array}{ccccc}
 & 1 & z & z^2 & z^3 \\[.5em]
1 ~&  ~1.0000 - 0.0000i ~& ~ 0.2500 + 0.4330i ~ & ~ 0.2500 - 0.4330i ~&~  1.0000 - 0.0000i \\[.5em]
\bar{z} ~&   0.2500 - 0.4330i  & 1.0000 - 0.0000i  & 0.2500 + 0.4330i  & 0.2500 - 0.4330i \\[.5em]
\bar{z}^2 ~&   0.2500 + 0.4330i  & 0.2500 - 0.4330i &  1.0000 - 0.0000i &  0.2500 + 0.4330i \\[.5em]
\bar{z}^3 ~&   1.0000 - 0.0000i &  0.2500 + 0.4330i &  0.2500 - 0.4330i  & 1.0000 - 0.0000i
\end{array}
$$
Notice that the Toeplitz property holds. For instance, the term indexed by $(\bar{z},z^2)$ is equal to the term indexed by $(\bar{z}^2,z^3)$ (their common value is $0.2500 + 0.4330i$). The moment matrix satisfies $\text{rank}M_0(y) =1$ and $\text{rank}M_1(y) = \text{rank}M_2(y) = \text{rank}M_3(y) = 2$. While Condition 2 in Theorem \ref{th:toeplitz} does not hold since $d=d_K =3$, the rank is preserved between the current truncation order ($d=3$) and the previous ($d-1=2$). This suffices to guarantee the extraction of $\text{rank}M_{d-1}(y)$-atomic measure, i.e with 2 atoms. However, the atoms will not necessarily lie in the feasible set, so we must check for this in the end. We have the Cholesky decomposition $M_3(y) = X^*X$ where
$$
X = \hdots 
$$
$$
\begin{array}{cccc}
1 & z & z^2 & z^3  \\[.5em]
\hphantom{-}1.0000 + 0.0000i\hphantom{-} & \hphantom{-}0.2500 + 0.4330i\hphantom{-} & \hphantom{-}0.2500 - 0.4330i\hphantom{-} & \hphantom{-}1.0000 + 0.0000i\hphantom{-} \\[.5em]
\hphantom{-}0.0000 + 0.0000i\hphantom{-} & \hphantom{-}0.8660 + 0.0000i\hphantom{-} & \hphantom{-}0.4330 + 0.7500i\hphantom{-} & -0.0000 + 0.0000i\hphantom{-}
\end{array}
$$
for which the row basis indexed by $\{1,z\}$ is readily identified. We then obtain the shift operator
\begin{equation}
T = 
\begin{pmatrix}
   0.2500 + 0.4330i ~&~  0.4330 - 0.7500i \\
   0.8660 + 0.0000i ~&~ 0.2500 + 0.4330i
\end{pmatrix}
\end{equation}
which is unitary
\begin{equation}
T^*T = 
\begin{pmatrix}
   \hphantom{-}1.0000 + 0.0000i ~& -0.0000 - 0.0000i \\
  -0.0000 + 0.0000i  ~& \hphantom{-}1.0000 + 0.0000i
\end{pmatrix}.
\end{equation}
We have $T=PDP^*$ where
\begin{equation}
P = 
\begin{pmatrix}
  -0.6124 + 0.3536i ~&~  0.6124 - 0.3536i \\
  \hphantom{-}0.7071 + 0.0000i ~&~  0.7071 + 0.0000i
\end{pmatrix}
\end{equation}
and
\begin{equation}
D =  
\begin{pmatrix}
  -0.5000 + 0.8660i ~&~  0.0000 + 0.0000i \\
   \hphantom{-}0.0000 + 0.0000i ~&~  1.0000 + 0.0000i
\end{pmatrix}
\end{equation}
whence the two atoms are 
\begin{equation}
-0.5000 + 0.8660i ~~~\text{and}~~~ 1.0000 + 0.0000i
\end{equation}
Their respective weights are
\begin{equation}
\begin{array}{l}
|x_1^* p_1|^2 = |(1.0000 + 0.0000i)\times (-0.6124 + 0.3536i)|^2 = 0.5000,\\
|x_1^* p_2|^2 = |(1.0000 + 0.0000i)\times (\hphantom{-}0.7071 + 0.0000i)|^2 = 0.5000.
\end{array}
\end{equation}
It is easy to check that the atoms found are third roots of unity. In the fourth order relaxation, all conditions of Theorem \ref{th:toeplitz} hold, and we find the optimal value $1.0000$ and the same solutions.
\end{example}

\begin{example}[Real polynomial optimization] 
\label{eg:real}
\\\\
Minimize
$$-(x_1-1)^2 - (x_1-x_2)^2 - (x_2-3)^2$$
over $x_1,x_2,x_3 \in \mathbb{R}$ subject to 
$$ 1-(x_1-1)^2 \geqslant 0 , ~~~ 1-(x_1-x_2)^2 \geqslant 0 , ~~~ 1-(x_2-3)^2 \geqslant 0. $$
This example is taken from \cite[Ex. 5]{lasserre-2001}. Henrion and Lasserre's method for extracting global minimizers is illustrated on this example in \cite[Section 2.3]{henrion2005}. The feasible set is represented in Figure \ref{fig:triangle}.
\begin{figure}[!h]
	\centering
	\includegraphics[width=.5\textwidth]{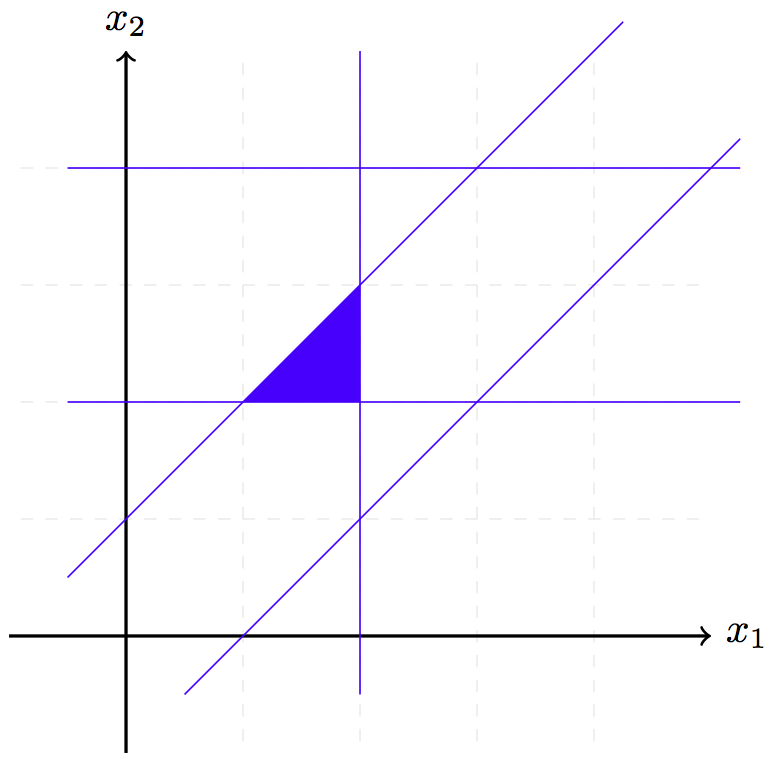}
	\caption{Feasible set in Example \ref{eg:real}: blue triangle}
	 \label{fig:triangle}
\end{figure}

The first order relaxation yields the value -3.0000 and a spectral decomposition indicates that $\text{rank} M_0(y) = 1$ and $\text{rank} M_1(y) = 3$. Thus, the rank is not preserved. The third order relaxation yields -2.0000 and the following moment matrix:
$$
M_2(y) = 
\begin{array}{ccccccc}
 & 1 & x_1 & x_2 & x_1^2 & x_1 x_2 & x_2^2 \\[.25em]
1 & 1.0000  &  1.4150 &   2.1182 &   2.2449  &  3.0663  & 4.5908  \\[.25em]
x_1 & 1.4150  &  2.2449  &  3.0663 &   3.9048  &  4.9625  &  6.8415 \\[.25em]
x_2 & 2.1182  &  3.0663  &  4.5908   & 4.9625   & 6.8415 &  10.2450 \\[.25em]
x_1^2 & 2.2449 &   3.9048 &   4.9625   & 7.2246  &  8.7549  & 11.3429\\[.25em]
x_1x_2 & 3.0663  &  4.9625  &  6.8415 &   8.7549  & 11.3429  & 15.8099\\[.25em]
x_2^2 & 4.5908  &  6.8415  & 10.2450  & 11.3429  & 15.8099  & 23.6804
\end{array}
$$
in which the rank is preserved since $\text{rank}M_1(y) = \text{rank}M_2(y) = 3$. Notice that the Hankel property holds. For instance the term indexed by $(x_1^2,x_2)$ is equal to the term indexed by $(x_1,x_1 x_2)$ (their common value is 4.9625).
We have the Cholesky decomposition $M_2(y) = X^TX$ where
$$
X = 
\begin{array}{cccccc}
1 & x_1 & x_2 &  x_1^2 &  x_1 x_2 & x_2^2 \\[.25em]
 \hphantom{-}1.0000 \hphantom{-} &  \hphantom{-}1.4150\hphantom{-} &  \hphantom{-}2.1182\hphantom{-} &  \hphantom{-}2.2449 \hphantom{-} &  \hphantom{-}3.0663\hphantom{-} &  \hphantom{-}4.5908\hphantom{-} \\[.25em]
0 & -0.4927\hphantom{-} & -0.1403\hphantom{-} & -1.4782 \hphantom{-} & -1.2660\hphantom{-} & -0.7015\hphantom{-} \\[.25em]
0 & 0 &  \hphantom{-}0.2907\hphantom{-} &  \hphantom{-}0.0000\hphantom{-} &  \hphantom{-}0.5814\hphantom{-} &  \hphantom{-}1.4536\hphantom{-}
\end{array}
$$
for which the row basis indexed by $\{1,x_1,x_2\}$ is readily identified. We then obtain the shift operators   
$$
T_1 = 
\begin{pmatrix}
    \hphantom{-}1.4150 &  -0.4927  & -0.0000 \\
   -0.4927   & \hphantom{-}1.5850 &  -0.0000 \\
         \hphantom{-}0.0000  & -0.0000 &   \hphantom{-}2.0000
\end{pmatrix}
~~~\text{and}~~~
T_2 =
\begin{pmatrix}
        \hphantom{-}2.1182  & -0.1403  &  \hphantom{-}0.2907 \\
   -0.1403  &  \hphantom{-}2.1666  & -0.3452 \\
    \hphantom{-}0.2907  & -0.3452 &   \hphantom{-}2.7152
\end{pmatrix}.
$$
which are symmetric matrices. For example, the image by $T_1$ of the column of $X$ indexed by $x_2$ is equal to the column of $X$ indexed by $x_1x_2$:
\begin{equation}
T_1 \times
\begin{pmatrix}
\hphantom{-}2.1182 \\
 -0.1403 \\
\hphantom{-}0.2907
\end{pmatrix}
=
\begin{pmatrix}
    \hphantom{-}3.0663" \\
   -1.2660 \\
    \hphantom{-}0.5814
\end{pmatrix}.
\end{equation}
Taking a random linear combination, we have    
$$  0.8494 T_1 + 0.1506 T_2 = 
\begin{pmatrix}
    \hphantom{-}1.5209  & -0.4396 &   \hphantom{-}0.0438 \\
   -0.4396  &  \hphantom{-}1.6726 &  -0.0520 \\
    \hphantom{-}0.0438  & -0.0520  &  \hphantom{-}2.1077
\end{pmatrix}  = PDP^T
$$
where
$$
P = 
\begin{pmatrix}
    \hphantom{-}0.7649 &   \hphantom{-}0.5448  &  \hphantom{-}0.3438 \\
    \hphantom{-}0.6442  & -0.6469 &  -0.4082 \\
   -0.0000  & -0.5336 &   \hphantom{-}0.8457
\end{pmatrix}   ,
~~~
D = 
\begin{pmatrix}
    1.1506      &   0     &    0 \\
    0   &  2.0000    &     0 \\
   0    &     0   &  2.1506
\end{pmatrix} .  
$$
The first coordinate of the atoms is given by the diagonal of 
$$
P^TT_1P =
\begin{pmatrix}
    \hphantom{-}1.0000  &  \hphantom{-}0.0000  & -0.0000 \\
    \hphantom{-}0.0000  &  \hphantom{-}2.0000  &  \hphantom{-}0.0000 \\
   -0.0000  &  \hphantom{-}0.0000 &   \hphantom{-}2.0000
\end{pmatrix}   
$$
while the second coordinate of the atoms is given by the diagonal of
$$
P^TT_2P =
\begin{pmatrix}
    \hphantom{-}2.0000  & -0.0000  &  \hphantom{-}0.0000 \\
   -0.0000  &  \hphantom{-}2.0000 &  -0.0000 \\
    \hphantom{-}0.0000  & -0.0000 &   \hphantom{-}3.0000
\end{pmatrix}.   
$$
The atoms can now be read
\begin{equation}
\begin{pmatrix}
    1.0000\\
    2.0000
\end{pmatrix}  ~~~ 
\begin{pmatrix}
    2.0000\\
    2.0000
\end{pmatrix}  ~~~ 
\begin{pmatrix}
    2.0000\\
    3.0000
\end{pmatrix}.
\end{equation}
With $x_1$ denoting the column of $X$ indexed by $1$, and with $p_1,p_2,p_3$ denoting the colums of $P$, the respective weights of the atoms are
\begin{equation}
\begin{array}{l}
(x_1^T p_1)^2 = |1.0000 \times 0.7649|^2 = 0.5850, \\[.2em]
(x_1^T p_2)^2 = |1.0000 \times 0.5448|^2 = 0.2968, \\[.2em]
(x_1^T p_3)^2 = |1.0000 \times 0.3438|^2 = 0.1182.
\end{array}
\end{equation}
\end{example}

\begin{example}[Prony's method via Autonne-Takagi factorization]
\label{eg:prony}

Consider the following complex exponential function where $z_1,z_2 \in \mathbb{C}$:
$$
f(z_1,z_2) := \hdots
$$
$$
1/4~ \exp(i\pi/2)~ \exp\{(-0.10+0.40i)z_1 + (0.05-0.80i)z_2\}
$$
$$ + $$
$$
1/3 ~ \exp(i4\pi/3) ~ \exp\{(0.03-0.35i)z_1 + (0.07-0.25i)z_2 \}.
$$
Its real part is represented in Figure \ref{fig:prony}.
\begin{figure}[!h]
	\centering
\includegraphics[width=.8\textwidth]{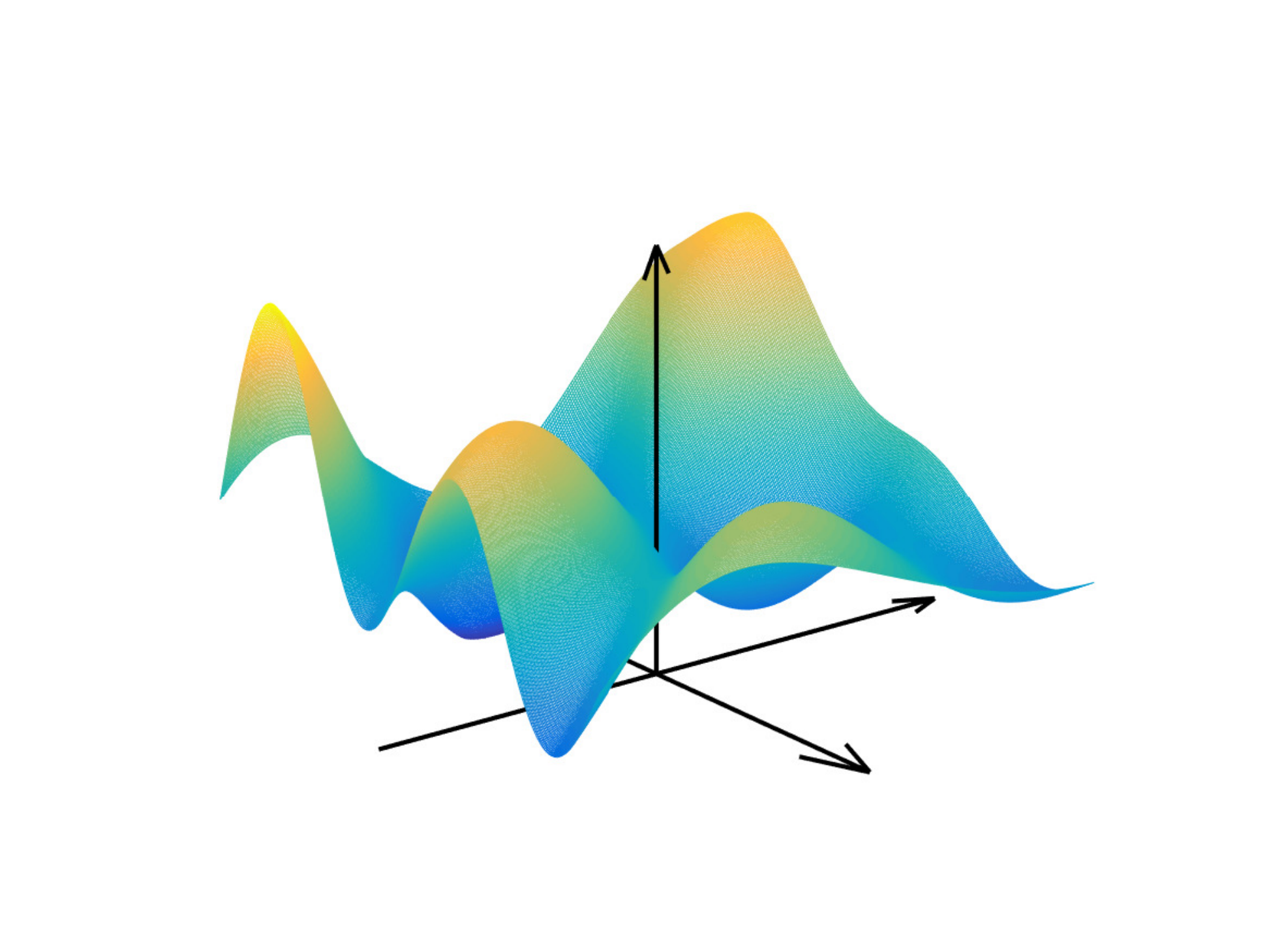}
	\caption{Signal to be reconstructed from evenly spaced measures (real part)}
	 \label{fig:prony}
\end{figure}

We reconstruct this function by evaluating it at $f(\alpha_1,\alpha_2)$ for $\alpha_1,\alpha_2 \in \mathbb{N}$ and $\alpha_1+\alpha_2 \leqslant 2d$. We start with $d=1$ and increment $d$ until the rank is preserved, i.e. $\text{rank}\mathcal{H}_{d}(y) = \text{rank}\mathcal{H}_{d-1}(y)$. We have $\text{rank}\mathcal{H}_0(y) = 1$ and $\text{rank}\mathcal{H}_1(y) = \text{rank}\mathcal{H}_2(y) = 2$. Thus we stop at the second order. The second order Hankel moment matrix contains the evaluations $f(\alpha_1,\alpha_2)$ for $\alpha_1+\alpha_2 \leqslant 4$. Precisely, $\mathcal{H}_d = ( f(\alpha+\beta) )_{|\alpha|,|\beta| \leqslant 2}$ and we have
$$
\mathcal{H}_2(y) = \hdots
$$
$$
\scriptsize
\begin{array}{ccccccc}
 & 1 & z_1 & z_2 & z_1^2 & z_1 z_2 & z_2^2 \\[.5em]
 1 & -0.1667 - 0.0387i & -0.3514 - 0.0122i & -0.0613 - 0.0727i & -0.4797 + 0.0222i & -0.2396 + 0.0597i & -0.0513 - 0.2076i\\[.5em]
z_1 &  -0.3514 - 0.0122i & -0.4797 + 0.0222i & -0.2396 + 0.0597i & -0.5373 + 0.0681i & -0.3778 + 0.1783i & -0.1544 + 0.0132i\\[.5em]
z_2 &  -0.0613 - 0.0727i & -0.2396 + 0.0597i & -0.0513 - 0.2076i & -0.3778 + 0.1783i & -0.1544 + 0.0132i & -0.1970 - 0.3346i\\[.5em]
z_1^2 &  -0.4797 + 0.0222i & -0.5373 + 0.0681i & -0.3778 + 0.1783i & -0.5202 + 0.1250i & -0.4546 + 0.2772i & -0.2401 + 0.2196i\\[.5em]
z_1z_2 &  -0.2396 + 0.0597i & -0.3778 + 0.1783i & -0.1544 + 0.0132i & -0.4546 + 0.2772i & -0.2401 + 0.2196i & -0.1842 - 0.0870i\\[.5em]
z_2^2 &  -0.0513 - 0.2076i & -0.1544 + 0.0132i & -0.1970 - 0.3346i & -0.2401 + 0.2196i & -0.1842 - 0.0870i & -0.4584 - 0.3256i
\end{array}
$$
The Autonne-Takagi factorization yields $\mathcal{H}_2(y) = X^TX$ where
$$
X = \hdots 
$$
$$
\scriptsize
\begin{array}{cccccc}
1 & z_1 & z_2 & z_1^2 & z_1 z_2 & z_2^2 \\[.5em]
-0.1052 + 0.4615i\hphantom{-} & \hphantom{-}0.0369 + 0.6628i\hphantom{-} & -0.0704 + 0.3889i\hphantom{-} & \hphantom{-}0.1866 + 0.7691i\hphantom{-} & \hphantom{-}0.1864 + 0.5507i\hphantom{-}  & -0.1207 + 0.3990i\hphantom{-}  \\[.5em]    
-0.2274 - 0.1285i\hphantom{-} & \hphantom{-}0.0626 - 0.2136i\hphantom{-} & -0.3707 + 0.2060i\hphantom{-} &  \hphantom{-}0.3184 - 0.2545i\hphantom{-} & -0.1736 - 0.0412i\hphantom{-} & -0.1935 + 0.5926i\hphantom{-}
\end{array}
$$
in which the row basis indexed by $\{1,z_1\}$ can be identified. We then obtain the complex symmetric shift operators
\begin{equation}
T_1 = 
\begin{pmatrix}
\hphantom{-}1.1490 - 0.3385i  &  -0.1879 - 0.3204i \\
-0.1879 - 0.3204i  &  \hphantom{-}0.6524 + 0.3376i
\end{pmatrix}
\end{equation}
\begin{equation}
T_2 =
\begin{pmatrix}
   0.9246 - 0.1751i & \hphantom{-}0.2857 + 0.0858i\\
  0.2857 + 0.0858i &  \hphantom{-}0.8470 - 0.8444i
\end{pmatrix}.
\end{equation}
For example, the image by $T_2$ of the column of $X$ indexed by $z_1$ is equal to the column of $X$ indexed by $z_1 z_2$:
\begin{equation}
T_2 \times 
\begin{pmatrix}
0.0369 + 0.6628i \\
0.0626 - 0.2136i
\end{pmatrix}
=
\begin{pmatrix}
\hphantom{-}0.1864 + 0.5507i \\
  -0.1736 - 0.0412i
\end{pmatrix}
\end{equation}
Taking a random linear combination, we get
$$   
0.8855 T_1  -0.1983 T_2 =
\begin{pmatrix}
   \hphantom{-}0.8341 - 0.2651i & -0.2230 - 0.3007i \\
  -0.2230 - 0.3007i & \hphantom{-}0.4098 + 0.4664i
\end{pmatrix} = PDP^T
$$
where
\begin{equation}
P = 
\begin{pmatrix}
1.0208 + 0.1231i & -0.3360 + 0.3740i \\
   0.3360 - 0.3740i & \hphantom{-}1.0208 + 0.1231i
\end{pmatrix}
\end{equation}
\begin{equation}
D =
\begin{pmatrix}
  0.6511 - 0.2603i &  \hphantom{.}0.0000 + 0.0000i \\
   0.0000 + 0.0000i &  \hphantom{.}0.5928 + 0.4616i
\end{pmatrix}
\end{equation}
We confirm that the inverse of $P$ is equal to its transpose since
\begin{equation}
P^T P = 
\begin{pmatrix}
   1.0000 + 0.0000i &  \hphantom{.}0.0000 - 0.0000i \\
   0.0000 - 0.0000i  & \hphantom{.}1.0000 + 0.0000i
\end{pmatrix}.
\end{equation}
The first coordinate of the atoms is given by the diagonal of
\begin{equation}
P^T T_1 P =
\begin{pmatrix}
 \hphantom{-}0.9680 - 0.3533i & \hphantom{.}0.0000 - 0.0000i \\
-0.0000 - 0.0000i & \hphantom{.}0.8334 + 0.3524i
\end{pmatrix}
\end{equation}
while the second coordinate of the atoms is given by the diagonal of
\begin{equation}
P^T T_2 P =
\begin{pmatrix}
   \hphantom{-}1.0392 - 0.2653i & -0.0000 - 0.0000i \\
  -0.0000 - 0.0000i &  \hphantom{-}0.7324 - 0.7541i
\end{pmatrix}.
\end{equation}
The atoms can now be read
\begin{equation}
\begin{pmatrix}
   0.9680 - 0.3533i \\
   1.0392 - 0.2653i
\end{pmatrix}
~~~\text{and}~~~
\begin{pmatrix}
   0.8334 + 0.3524i \\
   0.7324 - 0.7541i
\end{pmatrix}.
\end{equation}
With $x_1$ denoting the column of $X$ indexed by $1$, and with $p_1$ and $p_2$ denoting the first and second colums of $P$, the respective weights of the atoms are
\begin{equation}
\begin{array}{l}
(x_{0}^T p_1)^2 =  -0.1667 - 0.2887i , \\[.5em]
(x_{0}^T p_2)^2 = -0.0000 + 0.2500i. \\
\end{array}
\end{equation}
Taking the complex logarithm of the atoms, we obtain
\begin{equation}
\begin{pmatrix}
   0.0300 - 0.3500i \\
   0.0700 - 0.2500i
\end{pmatrix}
~~~\text{and}~~~
\begin{pmatrix}
  -0.1000 + 0.4000i \\
   \hphantom{-}0.0500 - 0.8000i 
\end{pmatrix}.
\end{equation}
As for the weights, they can be written
\begin{equation}
\begin{array}{l}
0.3333 \times \exp ( 4.1888i ) \\[.5em]
0.2500 \times \exp ( 1.5708i )
\end{array}
\end{equation}
which leads to the following function of $z_1$ and $z_2$
$$
0.2500~ \exp ( 1.5708i )~ \exp\{(-0.1000+0.4000i)z_1 + (0.0500-0.8000i)z_2\}
$$
$$ + $$
$$
0.3333 ~ \exp(4.1888i) ~ \exp\{(0.0300-0.3500i)z_1 + (0.0700-0.2500i)z_2 \}.
$$
The two terms in the sum can be visualized in Figure \ref{fig:prony12}.
\begin{figure}[!h]
\centering
\begin{minipage}{.5\textwidth}
  \centering
  \includegraphics[width=1.1\textwidth]{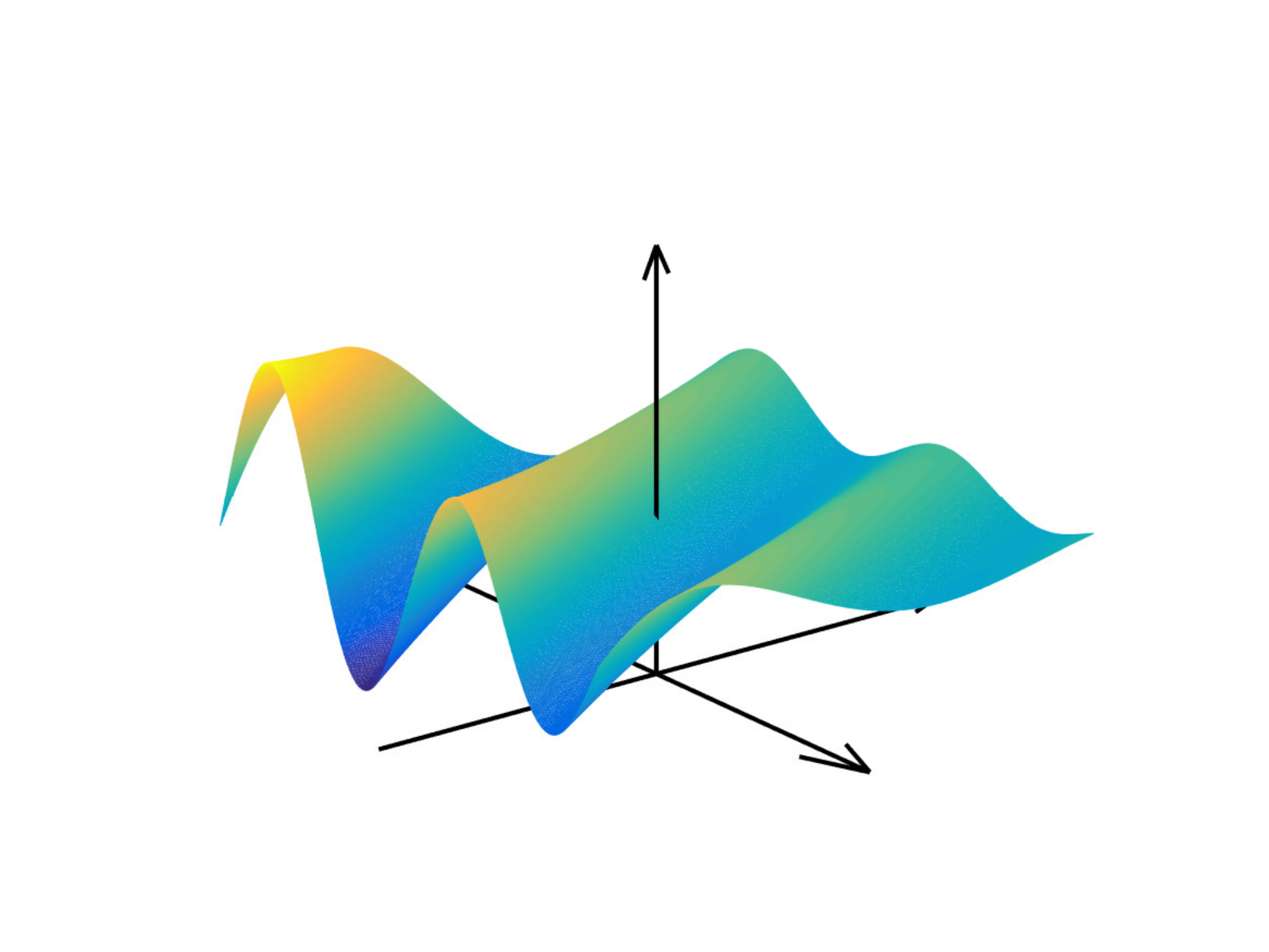}
  \label{fig:test1}
\end{minipage}%
\begin{minipage}{.5\textwidth}
  \centering
  \includegraphics[width=1.1\textwidth]{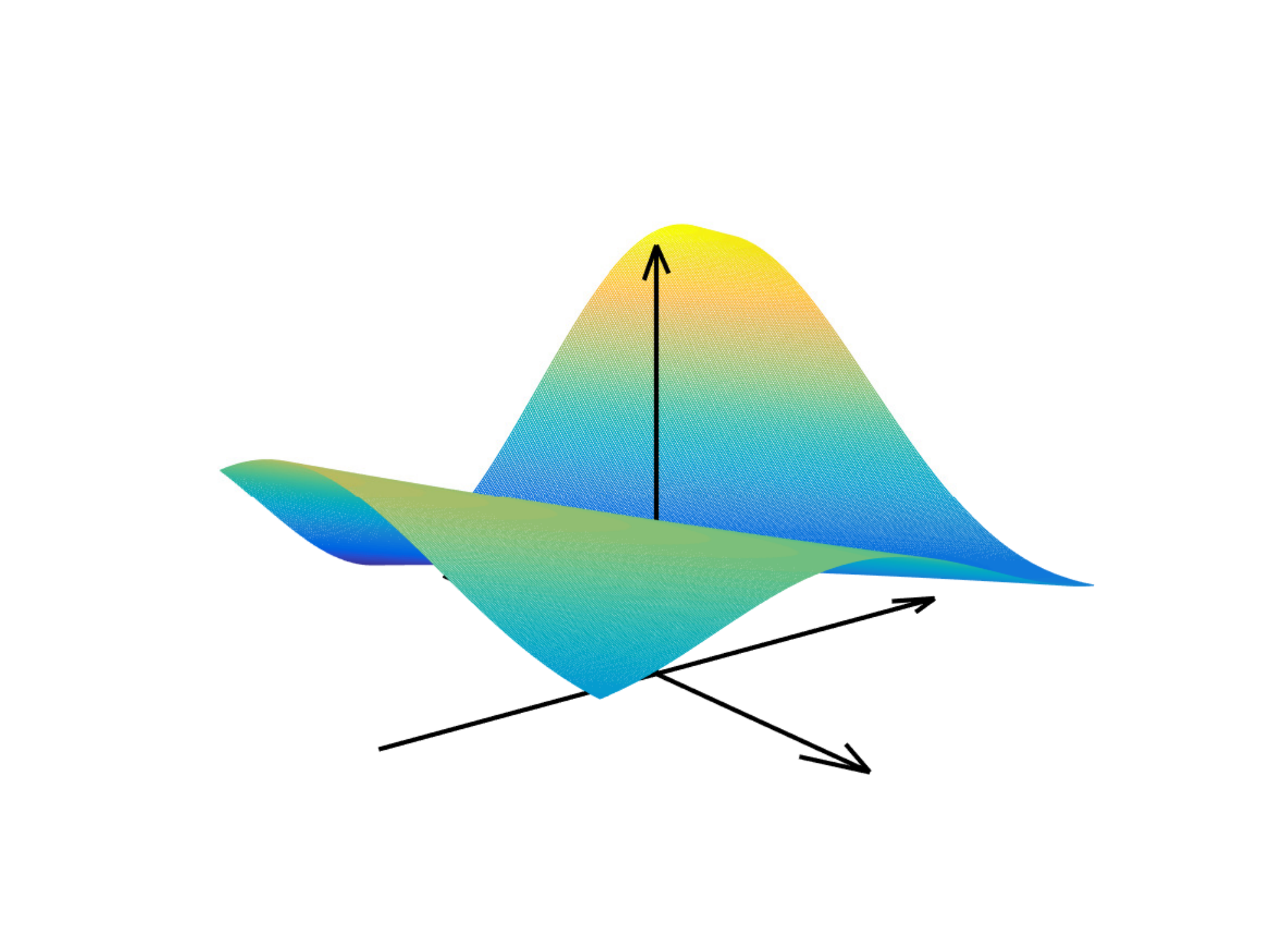}
  \label{fig:test2}
\end{minipage}
\caption{Decomposition of the signal into its components (real parts)}
\label{fig:prony12}
\end{figure}

\end{example}


\section{Conclusion}
\label{sec:Conclusion}
An algorithm is proposed for finding global solutions to polynomial optimization problems and for exponential interpolation. It is founded on the notion of hyponormality and is related to the truncated moment problem. Various numerical applications are provided along with graphical illustrations.

\section*{Acknowledgements}
Many thanks to Paulo Ricardo Arantes Gilz, Didier Henrion, Jean Bernard Lasserre, and Mihai Putinar for fruitful discussions.

\pagebreak

\appendix

\section{First Lemma}
\begin{lemma} 
\label{lemma:ind}
If $z^{(1)}, \hdots , z^{(d)}$ are distinct points of $\mathbb{C}^n$, then $v_{d-1}(z^{(1)}), \hdots , v_{d-1}(z^{(d)})$ are linearly independent vectors, where $v_d(z) := (z^\alpha)_{|\alpha\leqslant d}$.
\end{lemma}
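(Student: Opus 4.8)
The plan is to prove linear independence directly by producing, for each point, a Lagrange-type interpolant of degree at most $d-1$ and pairing it against the hypothetical dependence relation. Concretely, I would suppose that $\sum_{k=1}^d c_k v_{d-1}(z^{(k)}) = 0$ for some scalars $c_1,\ldots,c_d \in \mathbb{C}$ and aim to conclude that every $c_k$ vanishes. The guiding idea is that $d$ distinct points can be separated by products of affine-linear functions, and that a product of $d-1$ such factors has degree at most $d-1$, so its coefficients live in the same index set $|\alpha|\leqslant d-1$ that defines $v_{d-1}$.

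First I would build separating linear forms. For each ordered pair $(j,k)$ with $j\neq k$, distinctness of the points gives a coordinate $i=i(j,k)$ with $z^{(j)}_i \neq z^{(k)}_i$, and I would set $\ell_{jk}(z) := (z_i - z^{(k)}_i)/(z^{(j)}_i - z^{(k)}_i)$, a degree-one polynomial with $\ell_{jk}(z^{(j)})=1$ and $\ell_{jk}(z^{(k)})=0$. Next, for each fixed $j$ I would form the product $p_j(z) := \prod_{k\neq j} \ell_{jk}(z)$, which has degree at most $d-1$ and satisfies $p_j(z^{(k)}) = \delta_{jk}$. Expanding in the monomial basis as $p_j(z) = \sum_{|\alpha|\leqslant d-1} p_{j,\alpha}\, z^\alpha$, the coefficient vector $(p_{j,\alpha})_{|\alpha|\leqslant d-1}$ reproduces evaluation through the pairing $\sum_{|\alpha|\leqslant d-1} p_{j,\alpha} (z^{(k)})^\alpha = p_j(z^{(k)})$.

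Finally I would contract the assumed relation against this coefficient vector. Applying the linear functional determined by $(p_{j,\alpha})$ to $\sum_k c_k v_{d-1}(z^{(k)}) = 0$ gives $\sum_{k=1}^d c_k\, p_j(z^{(k)}) = 0$, and since $p_j(z^{(k)}) = \delta_{jk}$ this collapses to $c_j = 0$. As $j$ is arbitrary, all coefficients vanish, so the vectors $v_{d-1}(z^{(1)}),\ldots,v_{d-1}(z^{(d)})$ are linearly independent.

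The one point requiring care is the degree bound: the interpolants must be expressible using only monomials of degree at most $d-1$, and this is exactly what the product of $d-1$ separating affine factors delivers, so no general-position hypothesis beyond distinctness is needed. The multivariate aspect enters only in selecting, for each pair of distinct points, a single coordinate in which they differ; distinctness guarantees such a coordinate exists, which is why the argument works verbatim in $\mathbb{C}^n$ and reduces to classical Lagrange interpolation when $n=1$.
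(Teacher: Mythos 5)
Your proof is correct and follows essentially the same route as the paper: both construct, for each point, a Lagrange-type interpolant of degree at most $d-1$ as a product of separating affine factors (one coordinate chosen per pair of distinct points), and pair its coefficient vector against the assumed dependence relation to isolate each coefficient. The only difference is cosmetic (the paper's product index has a typo, ranging over $n$ instead of $d$), so nothing further is needed.
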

\begin{proof}
Consider some complex numbers $c_1, \hdots, c_d$ such that 
\begin{equation}
\label{eq:dependency}
\sum_{k=1}^d c_k (z^{(k)})^\alpha = 0 ~,~~~ \forall |\alpha|\leqslant d-1.
\end{equation}
Given $1\leqslant l \leqslant d$, define the Lagrange interpolation polynomial 
\begin{equation}
L^{(l)}(z) := \prod_{\scriptsize \begin{array}{c} 1 \leqslant k \leqslant n \\ k \neq l \end{array}}
\frac{ z_{i(k)} - z^{(k)}_{i(k)} }{ z^{(l)}_{i(k)} - z^{(k)}_{i(k)} } 
\end{equation}
 where $i(k) \in \{1,\hdots, n \}$ is an index such that $z^{(k)}_{i(k)} \neq z^{(l)}_{i(k)}$. It satisfies $L^{(l)}(z^{(k)}) = 1$ if $k = l$ and $L^{(l)}(z^{(k)}) = 0$ if $k \neq l$. The degree of $L^{(l)}(z) =: \sum_{\alpha} L^{(l)}_\alpha z^\alpha$ is equal to $d - 1$. Thus we may multiply the equation in \eqref{eq:dependency} by $L^{(l)}_\alpha$ to obtain
\begin{equation}
\sum_{k=1}^d c_k ~ L^{(l)}_\alpha (z^{(k)})^\alpha = 0 ~,~~~ \forall |\alpha|\leqslant d-1.
\end{equation}
Summing over all $|\alpha|\leqslant d-1$ yields $\sum_{k=1}^d c_k ~ L^{(l)}(z^{(k)}) = c_l = 0$. 
\end{proof}

\section{Second Lemma}
\begin{lemma}
\label{lemma:range}
\normalfont
\textit{If $u_1,\hdots,u_d \in \mathbb{C}^n$ are linearly independent, and $c_1,\hdots,c_d \in \mathbb{C}\setminus \{0\}$, then} $\mathcal{R}(\sum_{i=1}^d c_i u_i u_i^T) = \mathcal{R}(\sum_{i=1}^d c_i u_i u_i^*) = \text{span} \{u_1,\hdots,u_d\}$ where $\mathcal{R}$ denotes the range.
\end{lemma}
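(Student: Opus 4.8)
The plan is to recast both sums as a single matrix product and then exploit surjectivity. Collect the vectors into the $n \times d$ matrix $U := (u_1 \; \cdots \; u_d)$ and the weights into the diagonal matrix $C := \mathrm{diag}(c_1,\dots,c_d)$. Then $\sum_{i=1}^d c_i u_i u_i^T = U C U^T$ and $\sum_{i=1}^d c_i u_i u_i^* = U C U^*$, so it suffices to prove $\mathcal{R}(UCU^T) = \mathcal{R}(UCU^*) = \mathrm{span}\{u_1,\dots,u_d\}$.

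First I would record the two structural facts I need. Since the $u_i$ are linearly independent, $U$ has rank $d$; because rank is invariant under transposition and conjugation, both $U^T$ and $U^*$ are $d \times n$ matrices of rank $d$, hence surjective as maps $\mathbb{C}^n \to \mathbb{C}^d$. Second, $C$ is invertible because every $c_i$ is nonzero, so it is a bijection of $\mathbb{C}^d$. Finally, $U(\mathbb{C}^d)$ is exactly the column space $\mathrm{span}\{u_1,\dots,u_d\}$.

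The conclusion then follows by chasing the range through the composition. Viewing $UCU^T$ as the composite of $U^T$, $C$, and $U$: since $U^T$ is surjective its image is all of $\mathbb{C}^d$; applying the bijection $C$ leaves $\mathbb{C}^d$ unchanged; applying $U$ produces $U(\mathbb{C}^d) = \mathrm{span}\{u_1,\dots,u_d\}$. Hence $\mathcal{R}(UCU^T) = \mathrm{span}\{u_1,\dots,u_d\}$. Replacing $U^T$ by $U^*$ gives the identical chain and the same image, settling both equalities at once. (As a sanity check, the inclusion $\mathcal{R} \subseteq \mathrm{span}\{u_i\}$ is immediate since every output $\sum_i c_i(u_i^T x)\,u_i$ is a combination of the $u_i$; the surjectivity argument upgrades this to equality.)

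The computation is short, so the only point requiring care --- and the one I would flag as the main obstacle --- is to treat $U^T$ and $U^*$ on the same footing. One might be tempted to argue the conjugate-transpose case through positive semidefiniteness, but with complex weights $c_i$ and, for $U^T$, no Hermitian structure at all ($UCU^T$ is in general neither Hermitian nor positive), that route is unavailable. The argument must therefore be purely rank-theoretic, relying only on the facts that transposition and conjugation preserve rank and that $C$ is invertible; handled this way, both identities fall out of the same surjectivity chase.
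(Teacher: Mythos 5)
Your proof is correct and is in substance the same as the paper's: both arguments rest on the fact that $U^T$ and $U^*$ (equivalently, the transpose or conjugate transpose of $(c_1u_1\;\cdots\;c_du_d)$) have rank $d$ and are therefore surjective onto $\mathbb{C}^d$, together with the invertibility of the diagonal weight matrix. The only difference is presentational --- the paper proves the two inclusions separately, writing the reverse inclusion as the solvability of the linear system $\lambda = (c_1u_1\;\cdots\;c_du_d)^T z$, whereas you obtain both at once by chasing the range through the composition $U C U^T$.
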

\begin{proof}
If $z \in \mathbb{C}^n$, then $(\sum_{i=1}^d c_i u_i u_i^T)z = \sum_{i=1}^d (c_i  u_i^T z) u_i \in \text{span} \{u_1,\hdots,u_d\}$ and $(\sum_{i=1}^d c_i u_i u_i^*)z = \sum_{i=1}^d (c_i  u_i^* z) u_i \in \text{span} \{u_1,\hdots,u_d\}$. Conversly, an element of the span $\sum_{i=1}^d \lambda_i u_i$ with $\lambda_1,\hdots,\lambda_n \in \mathbb{C}$ belongs to the the range of $\sum_{i=1}^d c_i u_i u_i^T$ if there exists $z\in\mathbb{C}^n$ such that 
$$ \sum_{i=1}^d \lambda_i u_i = \sum_{i=1}^d (c_i  u_i^T z) u_i $$ 
which is equivalent to each of the next three lines:
\begin{equation} \sum_{i=1}^d [ \lambda_i - (c_i  u_i^T z) ] u_i = 0, \end{equation}
\begin{equation}  \lambda_i = (c_i  u_i)^T z ~,~ i=1,\hdots,d,\end{equation}
\begin{equation} \lambda =  ( c_1 u_1 \hdots c_d u_d )^T z. \end{equation}
Since $( c_1 u_1 \hdots c_d u_d ) \in \mathbb{C}^{n \times d}$ has rank $d$, its transpose has rank $d$. Thus there exists a desired $z \in \mathbb{C}^n$. Likewise, $\sum_{i=1}^d \lambda_i u_i$ belongs to the the range of $\sum_{i=1}^d c_i u_i u_i^*$ if there exists $z\in\mathbb{C}^n$ such that 
$$  \lambda_i = (c_i  u_i)^* z ~,~ i=1,\hdots,d. $$
Since $( c_1 u_1 \hdots c_d u_d ) \in \mathbb{C}^{n \times p}$ has rank $d$, its conjugate transpose has rank $d$. Thus there exists a desired $z \in \mathbb{C}^n$.
\end{proof}

\bibliography{mybib}{}
\bibliographystyle{siam}

\end{document}